\numberwithin{equation}{section}
\newtheorem{theorem}{Theorem}[section]
\newtheorem{lemma}[theorem]{Lemma}
\newtheorem{proposition}[theorem]{Proposition}
\theoremstyle{definition}
\newtheorem{definition}[theorem]{Definition}
\newtheorem{remark}[theorem]{Remark}
\newtheorem{example}[theorem]{Example}
\begin{document}
\title[Orthonormal Wavelet System in $\ell^2 ({\mathbb{Z}}^2_N)$]
{Orthonormal Wavelet System in $\ell^2 ({\mathbb{Z}}^2_N)$}
\author{Anupam Gumber  and Niraj K. Shukla} 

 \thanks{Address: Discipline of Mathematics,
	Indian Institute of Technology Indore, Simrol, 
	Indore-452 020, India. \\
   E-mail: anupamgumber.iiti@gmail.com, o.nirajshukla@gmail.com}
\begin{abstract}
	 Using the group theoretic approach based on the set of digits, we first investigate 
		a finite collection of functions in $\ell^2 ({\mathbb{Z}}^2_N)$
		that satisfies  some  localization properties  in a region of the time-frequency plane. The digits are  associated with an invertible (expansive/non-expansive)  matrix having integer entries. Next,  we study and characterize an orthonormal wavelet system   for $\ell^2 ({\mathbb{Z}}^2_N)$.    In addition,   some results connecting the uncertainty principle with functions 
		that generate the orthonormal wavelet system having time-frequency localization properties are  obtained. 
\end{abstract}
\subjclass[2010]{42C40, 42C15}
\keywords{Wavelet, uncertainty principle, time-frequency localization}
\maketitle

\section{Introduction}\label{sec1}
A time-frequency localized basis plays an important  role in extracting both time as well as  frequency 
information of a given signal, and the uncertainty principle helps us to understand how much local information
in time and frequency we can extract by using the above mentioned basis. Such basis has many applications in the real life 
problems, for example, to look 
closely at a potential tumor in medical image processing, in radar or sonar imaging, for compressing video images in 
video image analysis, for the recovery of lost part of the signals, etc. Due to this, many researchers are attracted towards  
the problem of finding  good bases having
both time as well as frequency localization properties (e.g. \cite{F, PBV,RT,V}). Furthermore, in this day and age of computers, processing can be done
only when the signal can be stored in memory. Therefore, the importance of discrete and finite signals can not be ignored. 
 
Wavelets are the latest and most successful tools to extract information from many different kinds of data including  but certainly 
not limited to  audio signals and images. 
Our main goal is to study   orthonormal wavelet systems 
having time-frequency localization properties for multidimensional setup. In particular,   we want to find 
 conditions on the sets $I_1, I_2$ and $A \in {GL}(2,\mathbb{R})$,  and to characterize 
$\Phi=\{\varphi_p\}_{p \,\in I_2} \subset \ell^2 ({\mathbb{Z}}^2_N)$ such that  the  
 collection $\mathfrak{B}(\Phi)$ (with distinct elements) 
defined by  
 \begin{equation}\label{eq 1.1}
\mathfrak{B}(\Phi)  := \{ T_{Ak} \varphi_{p}: k \in I_1 \subseteq {\mathbb{Z}}^2_N ,\, p \in I_2 \subset \mathbb{N} \}  
 \end{equation}
 forms an orthonormal basis for $\ell^2 ({\mathbb{Z}}^2_N)$ having time-frequency 
 localization properties, where for each $k \in {\mathbb{Z}}^2_N$ and $f \in \ell^2({\mathbb{Z}}^2_N)$, 
the \textit{translation operator} $T_k$    on $\ell^2({\mathbb{Z}}^2_N)$ is  defined by $T_{k}(f)(m)= f(m-k),\,\, \mbox{for all}
\,\,m \in {\mathbb{Z}}^2_N.$ For $N \in \mathbb N$,   the space $\ell^2 ({\mathbb{Z}}^2_N)$ denotes the collection  of all
$N\times N$ complex matrices by identifying 
$f \in \ell^2 ({\mathbb{Z}}^2_N)$ with 
the $N\times N$ matrix
 $ 
{{\begin{pmatrix}
f({n}_1,{n}_2)^t
\end{pmatrix}}}_{\substack{
   {n}_1,{n}_2 \in {\mathbb{Z}}_N 
  }}.
  $
Note that  it is an $N^2$-dimensional Hilbert space with usual inner product 
$$
 \langle f,g \rangle=\mbox{trace}(g^{*}f)= 
 \sum \limits_{(n_1,n_2)^t \,\in \, {\mathbb{Z}}^2_N} f(n_1,n_2)^t \overline {g(n_1,n_2)^t}, \  \ \mbox{for all}\  f, g  \in \ell^2 ({\mathbb{Z}}^2_N),
 $$
  where 
$g^{*}$  and $\mathbb{Z}_N:=\{0,1,2,\ldots,N-1 \}$ denote the 
conjugate transpose of $g$ and a group of integers under addition modulo $N$, respectively,  and  
${\mathbb{Z}}^2_N=\mathbb{Z}_N \times \mathbb{Z}_N=\{(m, n)^t:={m \choose n}: m, n \in \mathbb Z_N\}$.

By a \textit{time-frequency localized basis}, we mean that 
every vector in the basis is time localized as well as 
frequency localized.  An   $f\in \ell^2 ({\mathbb{Z}}^2_N)$ is \textit{time localized}  near $n_0\in \mathbb Z_N^2$ if most of the  components $f(n)$ of $f$ are $0$ or relatively small, except for a few values of $n$ close to $n_0$ and it is said to be  \textit{frequency localized} near $n_0$ if most of the components of discrete Fourier transform  of $f$ are $0$ or relatively small, except for a few values of $n$ close to $n_0$.   At this juncture, it is pertinent to note  that     the standard 
orthonormal  basis   for $\ell^2 ({\mathbb{Z}}^2_N)$ is  time localized but not frequency localized, while  its Fourier basis   
is frequency localized but not time localized.  
 In the last two decades a lot of research on time-frequency analysis  has been done by several authors for the various spaces, 
namely, finite and infinite  abelian groups, Euclidean spaces, etc. (e.g. \cite{F, GJ, GLT, GT,  H, KPR, RT, V, W}). 

In order to compare $\mathfrak{B}(\Phi)$ defined by (\ref{eq 1.1})  with the classical notion of orthonormal multiwavelet \cite{W}, 
let us consider $\Psi=\{ \psi_1, \psi_2, \ldots , \psi_L\} \subset L^{2}({\mathbb{R}}^n)$ 
with the dilation  matrix $A_0$ and translation ${\mathbb{Z}}^n$, which provides an orthonormal basis $\mathcal{A}(\Psi)$ 
for $L^{2}({\mathbb{R}}^n)$, where $$\mathcal{A}(\Psi):= \{ {\psi^l_{j,k}}:={|\det(A_0)|}^{j/2}\psi_l(A^{j}_{0}.-k): j \in \mathbb{Z},
k \in {\mathbb{Z}}^n ~\mbox{and}~ \psi_l \in \Psi\}.$$ In view of the system  $\mathcal{A}(\Psi)$, note that    elements of 
$\mathfrak{B}(\Phi)$ can be written  as
${\varphi^p_{0,Ak}}=T_{Ak} \varphi_p$, where $p \in I_2, k \in I_1.$ Further, we remark that if $\mathfrak {B}(\Phi)$ is an orthonormal basis for $\ell^2({\mathbb{Z}}^2_N)$, then one cannot construct an orthogonal set
by adding elements into $\mathfrak {B}(\Phi)$. This, in turn,  implies that there is no need to use dilation operator in the definition of an orthonormal wavelet system (ONWS) in $\ell^2({\mathbb{Z}}^2_N)$. Thus,
it leads us to conclude that we cannot use the natural definition of orthonormal multiwavelet of  $L^2(\mathbb R^n)$ in our case. 

 Further, in this paper,  
we study Heisenberg uncertainty principle in terms of the pair of representations of a given signal  by coupling 
ONWS with the standard basis and Fourier basis. This study  can  be useful to provide uniqueness 
properties of the sparse representation of the signal. The theory of   uncertainty principle related to the pair of bases 
and its applications  has been developed by many authors (e.g. \cite{DH,EB, GJ,K,KPR, RT}).  

The remainder of the  article is organized as follows:  In Section 2, 
 we make a necessary background to 
investigate a finite collection of functions in $\ell^2 ({\mathbb{Z}}^2_N)$
that satisfies  some  localization properties  in a region of the time-frequency plane  
using the group theoretic approach based on the set of digits. Using these functions, we first introduce an orthonormal wavelet system 
in $\ell^2({\mathbb{Z}}^2_N)$ and then provide a characterization for its generators. 
 Section $3$ presents some results on uncertainty principle corresponding to this 
orthonormal wavelet system. The results obtained in Sections 2 and 3 for $\mathbb Z_N^2$ can be generalized for $\mathbb Z_N^M$, $M \in \mathbb N$.

   Throughout the paper,   $\mathcal{M}(2,X)$ and ${GL}(2,X)$ denote  the collection of all $2\times2$ matrices and 
 collection of all $2\times2$ invertible matrices over $X$, respectively, where   $X$ is used as $\mathbb{Z}_N ~\mbox{or}~ 
 \mathbb{Z}$ (the set of integers)  or $\mathbb{R}$ (the set of real numbers). The notation $|\cdot|$ denotes the cardinality of a set, or, the absolute value of a complex number. For $A=\Big({\begin{matrix}
 	~a&b~ \\
 	~ c&d ~
 	\end{matrix}}\Big) \in \mathcal{M}(2,\mathbb{Z})$  and $I \subseteq \mathbb Z_N^2$,  
 \begin{align*}
 AI  
 	 :=\Big\{\Big({\begin{matrix}
 			~a&b~ \\
 			~ c&d ~
 		\end{matrix}}\Big)\Big({\begin{matrix}
 		~n_1~ \\
 		~ n_2 ~
 	\end{matrix}}\Big) \, modN = \Big({\begin{matrix}
 			~ an_1 \oplus bn_2~ \\
 			~cn_1 \oplus dn_2 ~
 		\end{matrix}}\Big):(n_1,n_2)^t \in I  \Big\}\subset \mathbb{Z}_N^2,
 	\end{align*}
 	$ \mbox{where}~ \oplus ~\mbox{denotes addition modulo}~ N.$ 


\section{Group theoretic approach for ONWS in $\ell^2({\mathbb{Z}}^2_N)$ }\label{sec2}
 Our main motive in this section is to answer the question imposed for the system $\mathfrak B(\Phi)$ defined in (1.1). 
 For this, we need to recall and establish some group theoretic results. 
From (\ref{eq 1.1}), it is clear that   $A {\mathbb{Z}}^2 \subseteq{\mathbb{Z}}^2,$  and $|\mathfrak{B}(\Phi)|= |AI_1||I_2|\leq |I_1||I_2|$ . Further, we note that the system $\mathfrak{B}(\Phi)$ should have $N^2$ distinct elements   to become  an orthonormal basis for 
$\ell^2 ({\mathbb{Z}}^2_N)$. Hence, we have
 $N^2=|\mathfrak{B}(\Phi)|= |AI_1||I_2|$, which implies that $|AI_1|= N^2/|I_2|.$ To find $I_1$ 
 and $I_2$, firstly we are interested in computing $|AI_1|$, where $I_1 \subseteq {\mathbb{Z}}^2_N$. 
 Clearly, $A {\mathbb{Z}}^2_N \subseteq{\mathbb{Z}}^2_N$. 
Now, by using the fact that
  ${\mathbb{Z}}^2_N$ is an abelian group, we   conclude that $A {\mathbb{Z}}^2_N$ is a normal subgroup of
    ${\mathbb{Z}}^2_N$. The motivation for considering subgroups of ${\mathbb{Z}}^2_N$ of this type comes from 
    the following theorem:
 \begin{theorem} \label{thm 2.1}
  Every subgroup of $ {\mathbb{Z}}^2$ is of the form $A{\mathbb{Z}}^2$ for some 
  $A \in \mathcal{M}(2,\mathbb{Z}).$
 \end{theorem}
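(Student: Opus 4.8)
The plan is to prove that every subgroup $H$ of $\mathbb{Z}^2$ has the form $A\mathbb{Z}^2$ for some integer matrix $A$. The statement is essentially the structure theorem for subgroups of free abelian groups of rank 2, and the cleanest route uses a Hermite-normal-form / Euclidean-algorithm argument on column vectors.

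Let me outline the approach. The trivial subgroup is $A\mathbb{Z}^2$ with $A=0$, so assume $H\neq\{0\}$.

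First I would handle the structure by looking at the first coordinates. Consider the projection $\pi:\mathbb{Z}^2\to\mathbb{Z}$ onto the first coordinate, and let $K=H\cap(\{0\}\times\mathbb{Z})$ be the kernel-intersection, i.e.\ the elements of $H$ whose first coordinate vanishes. Both $\pi(H)$ and $K$ are subgroups of $\mathbb{Z}$ (the latter identified with a subgroup of $\mathbb{Z}$ via the second coordinate), hence each is either trivial or of the form $d\mathbb{Z}$ for a unique nonnegative integer $d$, by the standard fact that every subgroup of $\mathbb{Z}$ is cyclic.

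Next I would build the generators explicitly. Write $\pi(H)=a\mathbb{Z}$ and $K=\{0\}\times d\mathbb{Z}$ for nonnegative integers $a,d$. The plan is to produce two column vectors that generate $H$. Choose $v_2=(0,d)^t\in H$ generating $K$. If $a=0$ then $H=K$ and we are done with $A=\bigl(\begin{smallmatrix}0&0\\0&d\end{smallmatrix}\bigr)$. If $a>0$, pick any $v_1=(a,c)^t\in H$ whose first coordinate equals the generator $a$ of $\pi(H)$; such a $v_1$ exists by definition of $\pi(H)$. I claim $H=\mathbb{Z}v_1+\mathbb{Z}v_2$. Indeed, for an arbitrary $w=(x,y)^t\in H$, the first coordinate $x$ lies in $a\mathbb{Z}$, say $x=ma$; then $w-mv_1\in H$ has first coordinate $0$, so it lies in $K=\mathbb{Z}v_2$, giving $w-mv_1=nv_2$ for some $n\in\mathbb{Z}$. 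Hence $w=mv_1+nv_2$, proving $H=A\mathbb{Z}^2$ where $A=(v_1\mid v_2)=\bigl(\begin{smallmatrix}a&0\\c&d\end{smallmatrix}\bigr)\in\mathcal{M}(2,\mathbb{Z})$.

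The routine pieces here are the two applications of the one-dimensional fact (subgroups of $\mathbb{Z}$ are cyclic), which I would cite or prove in one line via the division algorithm. The only step requiring genuine care is verifying the decomposition $w=mv_1+nv_2$ actually exhausts $H$, i.e.\ that no element of $H$ escapes the lattice generated by $v_1,v_2$; this is the coordinate-stripping argument just given, and it is where the choice of $v_1$ as a preimage of the \emph{generator} $a$ (rather than an arbitrary element of $\pi(H)$) is essential. I do not expect a serious obstacle: the main subtlety is purely bookkeeping, ensuring the subgroup $K$ is correctly identified as a subgroup of the second factor and that the two generating vectors assemble into the columns of $A$. An alternative, equivalent plan would run the Euclidean algorithm directly on a finite generating set of $H$, performing integer column operations (which correspond to right-multiplication by elements of $GL(2,\mathbb{Z})$) to reach an upper- or lower-triangular generating matrix; since $GL(2,\mathbb{Z})$ preserves the generated subgroup, this also yields the desired $A$, but the projection argument above is more transparent for rank 2.
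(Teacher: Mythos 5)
Your proof is correct, and it takes a genuinely different route from the paper's. The paper argues that every subgroup $H$ of $\mathbb{Z}^2$ needs at most two generators by starting from a (tacitly assumed finite) generating set of three vectors and eliminating one via explicit integer combinations such as $x(a,b)^t+y(c,d)^t=(0,b')^t$; it then assembles the surviving two generators into the columns of $A$. You instead run the standard structure-theorem argument: project $H$ onto the first coordinate to get $a\mathbb{Z}$, identify $K=H\cap(\{0\}\times\mathbb{Z})$ as $\{0\}\times d\mathbb{Z}$, lift the generator $a$ to some $v_1=(a,c)^t\in H$, and strip coordinates to show $H=\mathbb{Z}v_1+\mathbb{Z}v_2$ with $v_2=(0,d)^t$, yielding the lower-triangular matrix $A=\bigl(\begin{smallmatrix}a&0\\c&d\end{smallmatrix}\bigr)$. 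Your approach buys rigor and economy: it never needs to assume $H$ is finitely generated (the two generators are produced directly from the cyclicity of subgroups of $\mathbb{Z}$), it handles the degenerate cases $a=0$ or $d=0$ uniformly, and it lands directly on a Hermite-form matrix. The paper's reduction is more hands-on but leaves gaps your argument avoids --- in particular, the passage from $n$ generators to two is only illustrated for $n=3$, and the claim that $(0,b')^t,(c,d)^t,(e,f)^t$ still generate $H$ after the substitution requires the implicit change of generators to be invertible over $\mathbb{Z}$, which is not checked. Both arguments ultimately deliver the same conclusion, and either suffices for the way the theorem is used later (as motivation for considering subgroups $A\mathbb{Z}_N^2$).
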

 \begin{proof}
 We start by claiming that every subgroup of $ {\mathbb{Z}}^2$ is generated 
 by at most two generators. This means, for some integer $n\geq 1$, we have to show that 
 a subgroup of ${\mathbb{Z}}^2$, which is generated by $n$ generators is actually generated by at most two.
 For this, it is enough to assume three elements $(a,b)^t,(c,d)^t,(e,f)^t \in {\mathbb{Z}}^2$, which are
 supposed to generate a subgroup $H$ of ${\mathbb{Z}}^2$ and we need to show that one of them is redundant. 
 For this, let $a,c\neq 0$. Then, we choose $x,y\in \mathbb{Z}$ such that $x(a,b)^t+y(c,d)^t=(0,b')^t$ for some $b' \in \mathbb{Z}$.
Clearly, $(0,b')^t,(c,d)^t$ and $(e,f)^t$ will 
 generate $H$. If $b'=0$, then the result follows. Suppose $b'\neq 0$. By proceeding in the similar way,
  choose $x',y' \in \mathbb{Z}$ such that $x'(c,d)^t+y'(e,f)^t=(e',0)^t$ for some $e' \in \mathbb{Z}$. Then, $(0,b')^t,(c,d)^t$ 
  and $(e',0)^t$ will generate $H$, and hence, we can select   $m,n,p \in \mathbb{Z}$ satisfying the equation 
  $m(0,b')^t+ n(c,d)^t+ p(e',0)^t=(0,0)^t$, for which one choice is
 $m=-e'd,n=e'b'$, and $p=-b'c$. Therefore,    $m(0,b')^t+ n(c,d)^t + p(e',0)^t,(c,d)^t $ and $(e',0)^t$ are also generators of
 $H$, which implies that $H$ is generated by $(c,d)^t$ and $(e',0)^t$. Combining the facts discussed above, we get
 $
  H=\langle{(a,b)^t, (c,d)^t, (e,f)^t}\rangle =\langle{(c,d)^t,(e',0)^t}\rangle.
  $
  Further note that, if $(c,d)^t$ and $(e',0)^t$ are linearly independent, then $H$ is generated by two generators, and hence
  we can write
\begin{equation*}
  H=\langle{(c,d)^t,(e',0)^t}\rangle
  = \Big\{\Big({\begin{matrix}
~c&e'~ \\
~ d&0 ~
\end{matrix}}\Big)\Big({\begin{matrix}
m \\
n 
\end{matrix}}\Big) : m,n \in \mathbb{Z} \Big \}
=  \Big({\begin{matrix}
~c&e'~ \\
~d &0 ~
\end{matrix}}\Big){\mathbb{Z}}^2=
\Big({\begin{matrix}
~e'&c~ \\
~0 &d ~
\end{matrix}}\Big){\mathbb{Z}}^2.
  \end{equation*} 
Otherwise, $H$ will be generated by only one generator say $(c,d)^t$, and then it can be written as 
  \begin{equation*}
  H=\langle{(c,d)^t}\rangle
  = \{(cm,dm)^t : m \in \mathbb{Z} \}
  = \Big\{\Big({\begin{matrix}
~c&0~ \\
~ d&0 ~
\end{matrix}}\Big)\Big({\begin{matrix}
m \\
n 
\end{matrix}}\Big) : m,n \in \mathbb{Z} \Big \}
= \Big({\begin{matrix}
~c&0~ \\
~ d&0 ~
\end{matrix}}\Big){\mathbb{Z}}^2=\Big({\begin{matrix}
~0&c~ \\
~ 0&d ~
\end{matrix}}\Big){\mathbb{Z}}^2.
  \end{equation*}
Hence, we conclude the result from the fact that in each case, there exists $A \in  \mathcal{M}(2,\mathbb{Z})$ such that 
$H=A{\mathbb{Z}}^2.$
 \end{proof}
For a $2 \times 2$ expansive matrix $A$ having integer entries, it is well known that the order of group 
$\displaystyle \frac{{\mathbb{Z}}^2}{A {\mathbb{Z}}^2}$ is $|\det(A)|$ \cite[Proposition 5.5]{W} while  the result is also 
true for any invertible matrix with integer entries. By an \textit{expansive matrix}, we mean that all of its eigenvalues $\lambda$ satisfy $|\lambda|>1$.  
Now, we are interested in finding the order of group $A {\mathbb{Z}}^2_N$. Therefore, 
we have the following result:
 \begin{theorem} \label{thm}
  Let $ N \in \mathbb{N}$ and $A \in {GL}(2, \mathbb{R})$ such that  $A {\mathbb{Z}}^2 \subseteq{\mathbb{Z}}^2
   $, and let $B \in {GL}(2, \mathbb{Z})$, where $B=NA^{-1}$. Then, the determinant $\det(A)$ of $A$ divides $N^2$, and hence 
   the order of  groups $A {\mathbb{Z}}^2_N$ and 
   $\displaystyle \frac{{\mathbb{Z}}^2_N}{A{\mathbb{Z}}^2_N}$ is given by
   $\displaystyle |A {\mathbb{Z}}^2_N|=|\det(B)|= \frac{N^2}{|\det(A)|}~\mbox{and}~ 
    \Big|\displaystyle \frac{{\mathbb{Z}}^2_N}{A{\mathbb{Z}}^2_N}\Big|
   =|\det (A)|$, respectively.
  \end{theorem}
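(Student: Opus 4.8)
The plan is to reduce the statement to two standard facts about sublattices of $\mathbb{Z}^2$: the cited identity $|\mathbb{Z}^2/M\mathbb{Z}^2| = |\det(M)|$ for a nonsingular integer matrix $M$, and multiplicativity of the index in a tower of subgroups. First I would note that $A$ must have integer entries, since its two columns are $Ae_1, Ae_2 \in A\mathbb{Z}^2 \subseteq \mathbb{Z}^2$; hence $A \in \mathcal{M}(2,\mathbb{Z})$ and $\det(A)$ is a nonzero integer. Because the $2\times 2$ determinant is homogeneous of degree two, $\det(B) = \det(NA^{-1}) = N^2\det(A^{-1}) = N^2/\det(A)$. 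Since $B$ is an integer matrix, $\det(B) \in \mathbb{Z}$, so $\det(A)$ divides $N^2$ and $|\det(B)| = N^2/|\det(A)|$.

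The crucial structural step is the inclusion chain $N\mathbb{Z}^2 \subseteq A\mathbb{Z}^2 \subseteq \mathbb{Z}^2$. The right-hand inclusion is the hypothesis; for the left one I would write $NI = A(NA^{-1}) = AB$ with $B$ an integer matrix, so that $N\mathbb{Z}^2 = AB\mathbb{Z}^2 \subseteq A\mathbb{Z}^2$.

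This inclusion lets me identify the set $A\mathbb{Z}^2_N = \{Ax \bmod N : x \in \mathbb{Z}^2\} = (A\mathbb{Z}^2 + N\mathbb{Z}^2)/N\mathbb{Z}^2$ with the genuine quotient group $A\mathbb{Z}^2/N\mathbb{Z}^2$, because $A\mathbb{Z}^2 + N\mathbb{Z}^2 = A\mathbb{Z}^2$; hence $|A\mathbb{Z}^2_N| = [A\mathbb{Z}^2 : N\mathbb{Z}^2]$. Applying multiplicativity of the index to the tower $N\mathbb{Z}^2 \subseteq A\mathbb{Z}^2 \subseteq \mathbb{Z}^2$ gives $[\mathbb{Z}^2 : N\mathbb{Z}^2] = [\mathbb{Z}^2 : A\mathbb{Z}^2]\,[A\mathbb{Z}^2 : N\mathbb{Z}^2]$, where $[\mathbb{Z}^2 : N\mathbb{Z}^2] = N^2$ and, by the cited fact, $[\mathbb{Z}^2 : A\mathbb{Z}^2] = |\det(A)|$. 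Thus $|A\mathbb{Z}^2_N| = [A\mathbb{Z}^2 : N\mathbb{Z}^2] = N^2/|\det(A)| = |\det(B)|$, and finally $|\mathbb{Z}^2_N/A\mathbb{Z}^2_N| = N^2/|A\mathbb{Z}^2_N| = |\det(A)|$, as required.

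I expect the main obstacle to be the bookkeeping of the reduction modulo $N$: one must verify that $A\mathbb{Z}^2_N$ involves no extra collapsing and genuinely equals the sublattice quotient $A\mathbb{Z}^2/N\mathbb{Z}^2$ of $\mathbb{Z}^2_N$, which is exactly where the containment $N\mathbb{Z}^2 \subseteq A\mathbb{Z}^2$ is indispensable; without it the identification with a quotient of lattices, and hence the clean index count, would break down. An equivalent route is to apply the first isomorphism theorem to the endomorphism $\bar A$ induced by $A$ on $\mathbb{Z}^2_N$, whose kernel is $B\mathbb{Z}^2/N\mathbb{Z}^2$ of size $N^2/|\det(B)|$, so that $|A\mathbb{Z}^2_N| = N^2/|\ker\bar A| = |\det(B)|$; but the lattice-tower argument seems the most economical.
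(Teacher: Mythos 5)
Your argument is correct, and it takes a genuinely different route from the paper. The paper proves $|A\mathbb{Z}^2_N|=|\det(B)|$ by exhibiting an explicit surjective homomorphism $f\colon\mathbb{Z}^2\to A\mathbb{Z}^2_N$, $f(n)=An \bmod N$, verifying it is well defined and a homomorphism, computing $\mathrm{Ker}(f)=B\mathbb{Z}^2$, and then invoking the first isomorphism theorem together with the fact $|\mathbb{Z}^2/B\mathbb{Z}^2|=|\det(B)|$; this is essentially the ``equivalent route'' you sketch in your last sentence, except that the paper works with the map from $\mathbb{Z}^2$ rather than the induced endomorphism of $\mathbb{Z}^2_N$. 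Your main argument instead isolates the lattice tower $N\mathbb{Z}^2\subseteq A\mathbb{Z}^2\subseteq\mathbb{Z}^2$ (with the left inclusion coming from $NI=AB$ and $B$ integral), identifies $A\mathbb{Z}^2_N$ with $A\mathbb{Z}^2/N\mathbb{Z}^2$, and finishes by multiplicativity of the index, applying the determinant--index fact to $A$ rather than to $B$. Both proofs lean on the same external ingredient, $[\mathbb{Z}^2:M\mathbb{Z}^2]=|\det M|$ for nonsingular integer $M$ (which the paper cites for expansive matrices and asserts in general). What your version buys is economy -- no well-definedness, surjectivity, or kernel computation is needed -- and it makes visible exactly where the hypothesis $B\in GL(2,\mathbb{Z})$ enters, namely in both the divisibility claim and the containment $N\mathbb{Z}^2\subseteq A\mathbb{Z}^2$ without which $A\mathbb{Z}^2_N$ would not be a clean lattice quotient. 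What the paper's version buys is a concrete isomorphism $A\mathbb{Z}^2_N\cong\mathbb{Z}^2/B\mathbb{Z}^2$, which is reused conceptually later when coset representatives of $B\mathbb{Z}^2_N$ are taken as the digit set $\mathfrak{D}$.
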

\begin{proof}
 Let $A=\Big({\begin{matrix}
~a&b~ \\
~c&d ~
\end{matrix}}\Big)\in {GL}(2, \mathbb{R})$ be such that $a,b,c,d \in \mathbb{Z}$. Then,
$B=NA^{-1} \in {GL}(2, \mathbb{Z})$
 is given by $\Bigg({\begin{matrix}
~\frac{Nd}{\det(A)}& \frac{-Nb}{\det(A)}~ \\
~ \frac{-Nc}{\det(A)}&  \frac{Na}{\det(A)} ~
\end{matrix}}\Bigg).$ 
Since 
$\displaystyle \frac{N^2}{\det(A)}= \frac{Na}{\det(A)} \times \frac{Nd}{\det(A)}-
\frac{Nb}{\det(A)} \times 
\frac{Nc}{\det(A)}$ 
 is an integer, the  $\det(A)$ divides $N^2.$ 
Next, we have to show that
$\displaystyle |A {\mathbb{Z}}^2_N|=|\det(B)|=\frac{N^2}{|\det(A)|}.$
For this, it is enough to see isomorphism between groups $A{\mathbb{Z}}^2_N$ and $\displaystyle \frac{{\mathbb{Z}}^2}{B
{\mathbb{Z}}^2}$, that means, $A{\mathbb{Z}}^2_N \cong \displaystyle \frac{{\mathbb{Z}}^2}{B{\mathbb{Z}}^2}$, which implies
$|A{\mathbb{Z}}^2_N| = \displaystyle \Big |\frac{{\mathbb{Z}}^2}{B
{\mathbb{Z}}^2}\Big|=|\det(B)|=\frac{N^2}{|\det(A)|},$ and hence $
    \Big|\displaystyle \frac{{\mathbb{Z}}^2_N}{A{\mathbb{Z}}^2_N}\Big|=
     \displaystyle \frac{|{\mathbb{Z}}^2_N|}{|A{\mathbb{Z}}^2_N|}= 
     |\det (A)|.$
 Now, in order to prove the above claim, consider a map
$  f: {\mathbb{Z}}^2 \rightarrow A{\mathbb{Z}}^2_N$
 defined for all $(n_1,n_2)^t \in {\mathbb{Z}}^2$ by 
 $f(n_1,n_2)^t=A\Big({\begin{matrix}
r_1 \\
r_2 
\end{matrix}}\Big)\, mod\,N,\,\mbox{where}~n_i\equiv r_i\,mod\,N,~\mbox{for}~i=1,2.$
 Clearly, $f$ is well defined and further, the homomorphism of $f$ follows by noting that 
 for all $ (n_1,n_2)^t,(m_1,m_2)^t \in {\mathbb{Z}}^2$, we can write
 $n_i\equiv r_i\,mod\,N, m_j\equiv R_j\,mod\,N, \mbox{and}~ 
(n_i + m_j)\equiv R_{ij} \,mod\,N$, for $i,j=1,2$, and hence
\begin{align*}
 f((n_1,n_2)^t+(m_1,m_2)^t)
  &=A\Big({\begin{matrix}
R_{11} \\
R_{22}
\end{matrix}}\Big)\, mod\,N = A\Big({\begin{matrix}
r_1 \oplus R_1 \\
r_2 \oplus R_2
\end{matrix}}\Big)\, mod\,N\\
&= A\Big({\begin{matrix}
\Big({\begin{matrix}
r_1 \\
r_2 
\end{matrix}}\Big) \oplus \Big({\begin{matrix}
 R_1 \\
 R_2
\end{matrix}}\Big)
\end{matrix}}\Big)\, mod\,N = f(n_1,n_2)^t\oplus f(m_1,m_2)^t.
 \end{align*}
 Next, the map $f$ is onto and its kernel Ker($f$) is given by
\begin{align*}
~\mbox{Ker}(f)&=\Big \{ (m_1,m_2)^t \in {\mathbb{Z}}^2: f(m_1,m_2)^t=  A\Big({\begin{matrix}
 R_1 \\
 R_2
\end{matrix}}\Big)\,mod\,N=\Big({\begin{matrix}
 0 \\
 0
\end{matrix}}\Big)\Big \}\\
&= \Big \{ (m_1,m_2)^t \in {\mathbb{Z}}^2:~\mbox{for}~ q_1,q_2 \in \mathbb{Z},A\Big({\begin{matrix}
 m_1-Nq_1 \\
 m_2-Nq_2
\end{matrix}}\Big)\,mod\,N=\Big({\begin{matrix}
 0 \\
 0
\end{matrix}}\Big)\Big \}\\
             &= \Big \{ (m_1,m_2)^t \in {\mathbb{Z}}^2: A\Big({\begin{matrix} m_1 \\ m_2 \end{matrix}}\Big) \in N\mathbb{Z}\times N\mathbb{Z}=NI{\mathbb{Z}}^2 \Big \}\\
              &= \Big \{ (m_1,m_2)^t \in {\mathbb{Z}}^2: (m_1,m_2)^t \in NA^{-1}{\mathbb{Z}}^2=
              B{\mathbb{Z}}^2 \Big\}\\
              &= B{\mathbb{Z}}^2
              ,~\mbox{where}~ I  
             ~\mbox{is a}~ 2\times 2 ~\mbox{identity matrix}.
\end{align*}
Therefore, by fundamental theorem of group homomorphism,
$
A{\mathbb{Z}}^2_N \cong \displaystyle \frac{{\mathbb{Z}}^2}{B
{\mathbb{Z}}^2}
$. 
\end{proof}
\begin{remark}\label{re 2.5}
 If $ A \in {GL}(2, \mathbb{R})$ is a matrix such that $A {\mathbb{Z}}^2 \subseteq{\mathbb{Z}}^2$ and 
 $\det(A)$ divides $N^2,$ then 
 the matrix $N {A}^{-1}$ need not be a member of ${GL}(2, \mathbb{Z}).$ 
For example, let $A=\Big({\begin{matrix}
~3&1~ \\
~1&3 ~
\end{matrix}}\Big)$ and $N=4$.
Then $\det(A)~\mbox{divides}~N^2$, but 
$NA^{-1}=\displaystyle \frac{4}{8}\Big({\begin{matrix}
~3&-1~ \\
~-1&3 ~
\end{matrix}}\Big)=\Big({\begin{matrix}
~3/2&-1/2~ \\
~-1/2&3/2 ~
\end{matrix}}\Big) \notin {GL}(2, \mathbb{Z}).$
\end{remark}
   From Theorem \ref{thm}, we have 
  $\Big|\displaystyle \frac{{\mathbb{Z}}^2_N}{B{\mathbb{Z}}^2_N}\Big| =|\det (B)| = \frac{N^2}{|\det (A)|}=r,~\mbox{(say)}$.
  In the rest of the paper, by $\{\overline{\beta}_i\}_{i=1}^{r}$ we denote the distinct coset representatives of
   $ \displaystyle \frac{{\mathbb{Z}}^2_N}{B {\mathbb{Z}}^2_N}$, where for
   $1\leq i \leq r,~\mbox{}~\overline{\beta}_i= {\beta}_i + B {\mathbb{Z}}^2_N$ such that
   ${\beta}_i\in 
   \mathfrak{D}.$ 
   Here, the set $\mathfrak{D}\subseteq ({\mathbb{Z}}^2_N\setminus B{\mathbb{Z}}^2_N) \cup \{(0,0)^t\}$ is termed as
   {\it{the 
   set of digits}} corresponding to the cosets of $B {\mathbb{Z}}^2_N ~\mbox{in}~{\mathbb{Z}}^2_N.$ Clearly,
   for $1 \leq i\neq j \leq r, ~\mbox{we have}~{\beta}_i \neq {\beta}_j ~\mbox{and}~
   ~ ({\beta}_i + B {\mathbb{Z}}^2_N)\cap ({\beta}_j + B {\mathbb{Z}}^2_N)= \phi$.  
   Hence, $|\mathfrak{D}|=\Big|\displaystyle \frac{{\mathbb{Z}}^2_N}{B{\mathbb{Z}}^2_N}\Big| ~\mbox{and we can write},
   \displaystyle{\mathbb{Z}}^2_N=
  \bigcup_{{\beta} \,\in \,\mathfrak{D}}({{\beta} + B{\mathbb{Z}}^2_N}).$ The set $\mathfrak{D}$ satisfies some 
  nice properties, which are discussed in the following result:
 \begin{theorem}
 \label{thm 2.6} With the assumption of Theorem~\ref{thm}, let us consider the set  
   $ \mathfrak{D} \subseteq {\mathbb{Z}}^2_N$ defined as above with $(0,0)^t$ element having property that
   $ Ad_1\neq Ad_2 ~\mbox{whenever}~ d_1 \neq d_2 \in \mathfrak{D}.$  Then, 
  we can choose $\mathfrak{D}$ satisfying following properties:
  \begin{itemize}
   \item [(i)] $|\mathfrak{D}|=|A {\mathbb{Z}}^2_N|=|A(\mathfrak{D})|= \displaystyle \frac{N^2}{|\det(A)|}.$ \ \ \quad 
    (ii) $A {\mathbb{Z}}^2_N=A(\mathfrak{D}).$ 
   \item [(iii)] For some $\beta \in \mathfrak{D},
   A\beta=(0,0)^t ~\mbox{implies that}~\beta=(0,0)^t$.
   \item [(iv)]For $ \displaystyle \overline{\beta}_1, \overline{\beta}_2 \in
   \Big\{\overline {\beta}=\beta +  B{\mathbb{Z}}^2_N :\beta
    \in \mathfrak{D} \Big\}, ~
 \overline{\beta}_1 \bigcap \overline{\beta}_2= \phi,$ 
  and we can write $ \displaystyle{\mathbb{Z}}^2_N=
  \bigcup_{\beta \,\in \,\mathfrak{D}}(\beta + B{\mathbb{Z}}^2_N).$
  \item [(v)] For $ \beta_1, \beta_2 \in \mathfrak{D},~ \mbox{we have}~ 
  (\beta_1+\beta_2)+ B {\mathbb{Z}}^2_N= \beta +  B {\mathbb{Z}}^2_N$ 
  for some $ \beta \in \mathfrak{D}.$
  \item [(vi)] For $\beta_1,\beta_2 \in \mathfrak{D}, ~ \mbox{there exists}~
  \beta \in \mathfrak{D} 
  ~\mbox{such that}~ A\beta_1-A\beta_2=A\beta.$ Moreover,
  $\beta_1=\beta_2$ if and only if $\beta=(0,0)^t.$
  \end{itemize}
 \end{theorem}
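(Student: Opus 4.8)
The plan is to base the whole argument on the single algebraic identity $AB = BA = NI$ coming from $B = NA^{-1}$, together with the fact (recorded just before the statement) that $\mathfrak{D}$ is a complete set of coset representatives of $B\mathbb{Z}^2_N$ in $\mathbb{Z}^2_N$ containing $(0,0)^t$. The preliminary observation I would isolate first is the behaviour of the reduction map $\bar{A}\colon \mathbb{Z}^2_N \to \mathbb{Z}^2_N$, $v \mapsto Av \bmod N$. On the one hand, for $v \equiv Bw \pmod N$ one computes $Av \equiv A(Bw) = (AB)w = Nw \equiv (0,0)^t \pmod N$, so $B\mathbb{Z}^2_N \subseteq \ker(\bar{A})$; on the other hand, if $Av \in N\mathbb{Z}^2$ then $v = A^{-1}(Av) \in NA^{-1}\mathbb{Z}^2 = B\mathbb{Z}^2$, whence $\ker(\bar{A}) = B\mathbb{Z}^2_N$ and $\bar{A}$ descends to an isomorphism $\mathbb{Z}^2_N / B\mathbb{Z}^2_N \cong A\mathbb{Z}^2_N$. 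In this language the standing hypothesis that $A$ is injective on $\mathfrak{D}$ says precisely that the elements of $\mathfrak{D}$ occupy pairwise distinct cosets of $B\mathbb{Z}^2_N$, i.e.\ that $\mathfrak{D}$ is a genuine transversal.

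With this in hand, (iv) is immediate: distinct representatives yield disjoint cosets, and since $|\mathfrak{D}| = |\mathbb{Z}^2_N/B\mathbb{Z}^2_N|$ these cosets exhaust $\mathbb{Z}^2_N$. For (i) I would simply count: by Theorem~\ref{thm} the number of cosets is $|\mathbb{Z}^2_N/B\mathbb{Z}^2_N| = N^2/|\det(A)|$, giving $|\mathfrak{D}| = N^2/|\det(A)|$; the same theorem yields $|A\mathbb{Z}^2_N| = N^2/|\det(A)|$; and $|A(\mathfrak{D})| = |\mathfrak{D}|$ exactly because $A$ is injective on $\mathfrak{D}$. Part (ii) then follows by a cardinality squeeze: $A(\mathfrak{D}) \subseteq A\mathbb{Z}^2_N$ trivially since $\mathfrak{D} \subseteq \mathbb{Z}^2_N$, while the two finite sets have equal cardinality by (i), forcing equality. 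Part (iii) is a one-line consequence of injectivity: as $(0,0)^t \in \mathfrak{D}$ and $A(0,0)^t = (0,0)^t = A\beta$ with $\beta \in \mathfrak{D}$, injectivity gives $\beta = (0,0)^t$.

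For (v) I would again use that $\mathfrak{D}$ is a transversal: given $\beta_1, \beta_2 \in \mathfrak{D}$, the element $\beta_1 + \beta_2 \in \mathbb{Z}^2_N$ lies in exactly one coset of $B\mathbb{Z}^2_N$, whose unique representative in $\mathfrak{D}$ is the required $\beta$. Part (vi) combines both ingredients. For existence, choose the unique $\beta \in \mathfrak{D}$ representing the coset of $\beta_1 - \beta_2$, so that $\beta_1 - \beta_2 - \beta \in B\mathbb{Z}^2_N$; applying $A$ and using $A(B\mathbb{Z}^2_N) \subseteq N\mathbb{Z}^2$ (again from $AB = NI$) collapses this to $A\beta_1 - A\beta_2 = A\beta$ in $\mathbb{Z}^2_N$. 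For the final equivalence, if $\beta_1 = \beta_2$ then $A\beta = (0,0)^t$ and (iii) forces $\beta = (0,0)^t$, while if $\beta = (0,0)^t$ then $A\beta_1 = A\beta_2$ and injectivity of $A$ on $\mathfrak{D}$ gives $\beta_1 = \beta_2$.

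The only genuinely delicate step, which I would write out most carefully, is the kernel computation $\ker(\bar{A}) = B\mathbb{Z}^2_N$ in the first paragraph, since it requires moving between integer representatives and their residues modulo $N$ without ambiguity; once $AB = NI$ is used to collapse $A(B\mathbb{Z}^2_N)$ to the zero coset, every remaining item reduces to bookkeeping with cosets and the injectivity hypothesis. Finally, each argument uses only the transversal structure and the identity $AB = NI$, so the entire proof generalizes verbatim to $\mathbb{Z}^M_N$ upon replacing $2$ by $M$.
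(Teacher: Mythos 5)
Your proposal is correct, and the item-by-item verifications of (i)--(vi) coincide in substance with the paper's own proof: (i) by counting cosets via Theorem~\ref{thm} and injectivity of $A$ on $\mathfrak{D}$, (ii) by the cardinality squeeze $A(\mathfrak{D})\subseteq A\mathbb{Z}^2_N$ with $|A(\mathfrak{D})|=|A\mathbb{Z}^2_N|$, (iii) directly from injectivity, (iv) and (v) from the transversal/quotient-group structure, and (vi) from surjectivity of $A$ onto $A\mathbb{Z}^2_N=A(\mathfrak{D})$ together with (iii). The one genuine difference is your opening lemma: the paper never computes the kernel of the reduction map $\bar{A}\colon \mathbb{Z}^2_N\to\mathbb{Z}^2_N$, and instead carries the condition ``$Ad_1\neq Ad_2$ whenever $d_1\neq d_2\in\mathfrak{D}$'' as a standing hypothesis that is invoked as a black box wherever needed. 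Your identification $\ker(\bar{A})=B\mathbb{Z}^2_N$ (via $AB=NI$ in one direction and $v=A^{-1}(Av)\in NA^{-1}\mathbb{Z}^2$ in the other) shows that this hypothesis is \emph{equivalent} to $\mathfrak{D}$ meeting each coset of $B\mathbb{Z}^2_N$ at most once, hence automatic for any transversal; this buys a cleaner logical structure (the ``moreover'' clause of (vi), which the paper argues somewhat circularly by asserting that $A\beta_1=A\beta_2$ ``is possible only for $\beta_1=\beta_2$,'' becomes a one-line consequence), and it makes explicit the isomorphism $\mathbb{Z}^2_N/B\mathbb{Z}^2_N\cong A\mathbb{Z}^2_N$ that the paper only uses implicitly. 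The proof is complete as proposed.
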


\begin{proof}
	The explanation for the above properties is as follows:
	\begin{itemize}
		\item [(i)] From Theorem~\ref{thm} and above discussion about the set of digits,  we have 
		$|\mathfrak{D}|=\left|\frac{{\mathbb{Z}}^2_N}{B{\mathbb{Z}}^2_N}\right| =|\det (B)|
		= \frac{N^2}{|\det (A)|}=|A{\mathbb{Z}}^2_N|$, and hence, $|\mathfrak{D}|=|A(\mathfrak{D})|$ by noting the fact  that      
		$Ad_1\neq Ad_2$ whenever $ d_1\neq d_2 \in \mathfrak{D}$.
		
		\item [(ii)] The result $A {\mathbb{Z}}^2_N=A(\mathfrak{D})$ follows by noting that $A(\mathfrak{D})\subseteq A{\mathbb{Z}}^2_N$ and $|A(\mathfrak{D})|=|A {\mathbb{Z}}^2_N|$. 
		
		\item [(iii)] For this part, let us assume by contrary that there exists some $\beta_1\neq (0,0)^t \in \mathfrak{D}$ 
		such that
		$ A\beta_1=(0,0)^t.$ But it is given that
		$\beta_1\neq (0,0)^t \in \mathfrak{D}~\mbox{implies}~
		A\beta_1\neq (0,0)^t
		$, which is  a contradiction. 
		\item [(iv)] The result is obvious which follows from the definition of $\mathfrak{D}$.
		\item [(v)]
		For $\beta_1,\beta_2 \in \mathfrak{D}$,   we have 
		$(\beta_1 + 
		B{\mathbb{Z}}^2_N) + (\beta_2 + B{\mathbb{Z}}^2_N) = (\beta_1
		+\beta_2) + B{\mathbb{Z}}^2_N \in  \frac{{\mathbb{Z}}^2_N}{B{\mathbb{Z}}^2_N}$ and hence, we can write
		$(\beta_1+ \beta_2)+ B {\mathbb{Z}}^2_N= \beta +  B {\mathbb{Z}}^2_N$ 
		for some $ \beta \in \mathfrak{D}.$
		\item [(vi)] In view of the fact $A(\mathfrak{D})= A {\mathbb{Z}}^2_N,$ and for $\beta_1,\beta_2 \in \mathfrak{D}$, we have 
		$A(\beta_1)-A(\beta_2) \in A {\mathbb{Z}}^2_N,$ and hence there exists
		$\beta \in \mathfrak{D}~\mbox{such that}~ A(\beta_1)-A(\beta_2)=
		A\beta.$ For the remaining  part,   let $\beta_1=\beta_2$.
		Then $A(\beta_1)=
		A(\beta_2),$ which implies that 
		$A(\beta_1)-A(\beta_2) 
		=A(\beta)= (0,0)^t,$ and hence $\beta= (0,0)^t.$ Conversely, 
		suppose $\beta=(0,0)^t.$ This means, $A(\beta)=  A(\beta_1)-A(\beta_2)
		= (0,0)^t,~\mbox{which says that} ~\mbox{either}~ \beta_1=\beta_2 ~\mbox{or}~  
		A(\beta_1)= A(\beta_2), \mbox{but}~ A(\beta_1)= A(\beta_2)$ 
	 is possible only for  $\beta_1= \beta_2.$   
	\end{itemize} 
\end{proof}
  In order to understand about $\mathfrak{D}$, we provide following example:
   \begin{example}
  \label{eg 2.8}
   Let $N=2$ and $~A = \Big({\begin{matrix}
~2&2~ \\
~ 1&2 ~
\end{matrix}}\Big).$ Then,
$A {\mathbb{Z}}^2_2=
B {\mathbb{Z}}^2_2=\Big\{(0,0)^t,(0,1)^t\Big \},$ 
where
$B=NA^{-1}=
\Big({\begin{matrix}
~2&-2~ \\
~-1&2 ~
\end{matrix}}\Big)$, and hence 
$\displaystyle|A {\mathbb{Z}}^2_2|=|B{\mathbb{Z}}^2_2|=2 ~\mbox{and}~ 
   \Big|\frac{{\mathbb{Z}}^2_2}{B{\mathbb{Z}}^2_2}\Big|
   =|\det (B)|=2.$
Further, we have
$$
(0,0)^t +B{\mathbb{Z}}^2_2=\overline{(0,0)^t}=\overline{(0,1)^t}=
\Big\{(0,0)^t,(0,1)^t \Big \},
\overline{(1,0)^t}=\overline{(1,1)^t}=\Big\{ (1,0)^t,(1,1)^t \Big \},
$$
and hence, we can choose $ ~\mathfrak{D}=\Big\{(0,0)^t,(1,0)^t\Big \} \subset {\mathbb{Z}}^2_2,$
where $(0,0)^t \in \mathfrak{D}$ and $Ad_1\neq Ad_2$ whenever $ d_1\neq d_2 \in \mathfrak{D}$.
 Now, it can be easily checked that $\mathfrak{D}$ 
  satisfies all properties of Theorem~\ref{thm 2.6}. 
 \end{example}
 In the rest of  paper, we assume that {\it{the set of digits $\mathfrak{D}$ satisfies properties of Theorem~\ref{thm 2.6}}}. 
 Further, by the set $\mathfrak{D}^* \subseteq ({\mathbb{Z}}^2_N \setminus C{\mathbb{Z}}^2_N) \cup \{(0,0)^t\}$, we denote
 {\it{the set of digits}} corresponding to the cosets of $C {\mathbb{Z}}^2_N ~\mbox{in}~{\mathbb{Z}}^2_N,$ where 
 $C=B^t=:$ transpose of the matrix $B$. Note that $|\mathfrak{D}^*|=|\mathfrak{D}|$ and we can write  
 $\displaystyle{\mathbb{Z}}^2_N= \bigcup_{m \,\in \,\mathfrak{D}^*}( m + C{\mathbb{Z}}^2_N).$
 
 Now, we are in position to provide an  answer about the question for the system $\mathfrak B(\Phi)$ defined by (\ref{eq 1.1}). 
   Theorem~\ref{thm 2.6} 
implies that $|A(\mathfrak{D})|=|\mathfrak{D}|=\displaystyle \frac{N^2}{|\det(A)|}$ and from (\ref{eq 1.1}),
we have $|\mathfrak{B}(\Phi)|=|AI_1||I_2|.$ In order to be an orthonormal basis for 
 $\ell^2 ({\mathbb{Z}}^2_N)$, $\mathfrak{B}(\Phi)$ should have $N^2$ distinct elements forming orthonormal set, 
 hence $N^2=|\mathfrak{B}(\Phi)|=|AI_1||I_2|$. Further, by assuming $I_1=\mathfrak{D}$, we get 
$N^2=|A(\mathfrak{D})||I_2|$, 
which implies  $|I_2|=|\det(A)|$ in view of the fact that $|AI_1|=|A(\mathfrak{D})|=\displaystyle \frac{N^2}{|\det(A)|}$. Now, first   we consider a collection whose elements are 
translations of a single vector $\varphi_0 \in \ell^2(\mathbb Z_N^2)$.  Then,  we have $|I_2|=1$, that is, $|\det(A)|=1,$  which implies $|AI_1|=N^2$, 
and hence in this case, we must have $I_1= \mathfrak{D}={\mathbb{Z}}^2_N$. Therefore,  the system defined 
in (\ref{eq 1.1}) takes the following form:
\begin{equation}\label{eq 2.1}
\mathfrak{B}(\varphi_0) := \{ T_{Ak} \varphi_{0}: k \in {\mathbb{Z}}^2_N \} = \{ T_{k}\varphi_{0}: k \in {\mathbb{Z}}^2_N \}
\subset \ell^2 ({\mathbb{Z}}^2_N).
\end{equation}

 Next, we want to   characterize $\varphi_0$ such that the system defined in (\ref{eq 2.1}) yields an orthonormal basis for 
$\ell^2 ({\mathbb{Z}}^2_N)$ having time-frequency localization properties. For this, we need   following definition and result:

The \textit{discrete Fourier transform} (DFT) and \textit{Inverse discrete Fourier transform} (IDFT) on $\ell^2 ({\mathbb{Z}}^2_N)$ are 
defined for $m,n \in 
{\mathbb{Z}}^2_N,$  and $f,g \in \ell^2 ({\mathbb{Z}}^2_N)$ by
$$
\displaystyle \widehat{f}(m)=  \sum_{s
\,\in \, {\mathbb{Z}}^2_N}f(s)e^{-2\pi i \langle{m,s}\rangle/N}
~\mbox{and}~
{g}^{\vee}(n)=\frac{1}{N^2} \sum \limits_{l \in {\mathbb{Z}}^2_N}g(l)
e^{2\pi i \langle{n,l}\rangle/N},
$$
respectively. Further, we have $(\widehat{f})^{\vee}(n)=f(n)$,
for all $n \in {\mathbb{Z}}^2_N$ and the
Plancherel's formula 
 $$
 \langle{f,g}\rangle=\displaystyle \frac{1}{N^2} \sum_{m \,\in \,
 {\mathbb{Z}}^2_N}\widehat{f}(m)\overline{\widehat{g}(m)}
 =\displaystyle \frac{1}{N^2} \langle{\widehat{f},\widehat{g}}\rangle,
 $$
 which provides Parseval's formula for $f=g.$ 
 \\
   
 The following lemma   can be proved easily: 
\begin{lemma}{\label{le 2.10}} 
For $ A \in \mathcal{M}(2, \mathbb{Z})$,   $k,k_1\in {\mathbb{Z}}^2_N$ and $f,g \in \ell^2({\mathbb{Z}}^2_N)$, 
we have following properties: 
\begin{itemize}
\item[(i)]  $\widehat{(T_{Ak}f)}(m)=e^{-2\pi i \langle{m,Ak}\rangle/N}\widehat{f}(m)$ for all $m \in {\mathbb{Z}}^2_N$. 
\item[(ii)]  $ \langle{T_{Ak_1}f,T_{Ak}g}\rangle = \langle{f,T_{(Ak-Ak_1)}g}\rangle$. 
\item[(iii)]  $\widehat{\delta}(k)=1,~\mbox{where}~\delta(k)=  1$ for $k=(0,0)^t,$ and zero otherwise. 
  \end{itemize}
   \end{lemma}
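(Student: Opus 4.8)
The plan is to establish all three identities by direct computation from the definition of the DFT, combined with a change of variables in the summation over the finite abelian group $\mathbb{Z}_N^2$. The single reusable fact behind everything is that, for a fixed $Ak \in \mathbb{Z}_N^2$, the shift map $u \mapsto u + Ak$ (addition modulo $N$) is a bijection of $\mathbb{Z}_N^2$ onto itself, so reindexing any sum by this translation leaves its value unchanged. Here $Ak \in \mathbb{Z}_N^2$ is guaranteed by the standing hypothesis $A\mathbb{Z}^2 \subseteq \mathbb{Z}^2$ followed by reduction modulo $N$, which is precisely why we restrict to $A \in \mathcal{M}(2,\mathbb{Z})$.

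For part (i), I would begin with $\widehat{(T_{Ak}f)}(m) = \sum_{s \in \mathbb{Z}_N^2} f(s - Ak)\, e^{-2\pi i \langle m, s\rangle/N}$ and substitute $u = s - Ak$. Using bilinearity of the pairing to split $\langle m, u + Ak\rangle = \langle m, u\rangle + \langle m, Ak\rangle$, the factor $e^{-2\pi i \langle m, Ak\rangle/N}$ pulls out of the sum, and the remaining sum is exactly $\widehat{f}(m)$, yielding the claimed modulation formula. For part (ii), I would expand $\langle T_{Ak_1}f, T_{Ak}g\rangle = \sum_{s} f(s - Ak_1)\, \overline{g(s - Ak)}$ and again substitute $u = s - Ak_1$, so that $s - Ak = u - (Ak - Ak_1)$; the sum then becomes $\sum_u f(u)\, \overline{(T_{Ak - Ak_1}g)(u)} = \langle f, T_{Ak - Ak_1}g\rangle$. (Alternatively one could route through Plancherel's formula together with part (i), but the direct reindexing is shorter.) Part (iii) is immediate: since $\delta$ is supported only at $(0,0)^t$ and $\langle k, (0,0)^t\rangle = 0$, the defining sum collapses to the single term $e^{0} = 1$ for every $k$.

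No step presents a genuine obstacle, as the lemma is a transcription of the standard translation and modulation properties of the Fourier transform to the finite setting. The only point requiring explicit care is justifying the translation-by-$Ak$ reindexing on the finite group, namely that $u \mapsto u + Ak$ is a permutation of $\mathbb{Z}_N^2$; I would state this once at the outset and then invoke it in both (i) and (ii).
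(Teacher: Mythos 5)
Your proof is correct: the paper itself omits the argument (it merely remarks that the lemma ``can be proved easily''), and your direct computation via the reindexing $u \mapsto u + Ak$ on $\mathbb{Z}_N^2$, together with bilinearity of the pairing for (i) and the collapse of the sum at $(0,0)^t$ for (iii), is exactly the standard verification the authors have in mind. Nothing further is needed.
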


In order to be an orthonormal basis for $\ell^2({\mathbb{Z}}^2_N)$, the system $\mathfrak{B}(\varphi_0)$ as defined in 
(\ref{eq 2.1}) has to satisfy 
following condition:
\begin{theorem}\label{thm 2.11}
The system $\mathfrak{B}(\varphi_0)$ forms an orthonormal basis for $\ell^2({\mathbb{Z}}^2_N)$ if and only if we have
  $|\widehat{\varphi_0}(k)|^2=1 ~\mbox{for all}~ k \in {\mathbb{Z}}^2_N.$ Moreover, orthonormal basis of this form is not 
  frequency localized.
 \end{theorem}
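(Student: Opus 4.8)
The plan is to first reduce the basis statement to a single orthonormality condition, and then transfer that condition to the Fourier side. Since $\ell^2(\mathbb{Z}^2_N)$ has dimension $N^2$ and, by the set equality recorded in \eqref{eq 2.1}, $\mathfrak{B}(\varphi_0)=\{T_k\varphi_0:k\in\mathbb{Z}^2_N\}$ consists of $N^2$ vectors, it suffices to prove that $\mathfrak{B}(\varphi_0)$ is an orthonormal set: any orthonormal family of $N^2$ vectors in an $N^2$-dimensional Hilbert space is automatically a basis. Using the translation-invariance of the inner product from Lemma~\ref{le 2.10}(ii), I would rewrite $\langle T_{k'}\varphi_0,T_k\varphi_0\rangle=\langle\varphi_0,T_{k-k'}\varphi_0\rangle$, so that orthonormality is equivalent to the single family of identities $\langle\varphi_0,T_j\varphi_0\rangle=\delta(j)$ for all $j\in\mathbb{Z}^2_N$.

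Next I would evaluate $\langle\varphi_0,T_j\varphi_0\rangle$ on the frequency side. Applying Plancherel's formula together with Lemma~\ref{le 2.10}(i), which gives $\widehat{T_j\varphi_0}(m)=e^{-2\pi i\langle m,j\rangle/N}\widehat{\varphi_0}(m)$, I obtain
\[
\langle\varphi_0,T_j\varphi_0\rangle=\frac{1}{N^2}\sum_{m\in\mathbb{Z}^2_N}\big|\widehat{\varphi_0}(m)\big|^2\, e^{2\pi i\langle m,j\rangle/N}=\big(|\widehat{\varphi_0}|^2\big)^{\vee}(j).
\]
Hence orthonormality is equivalent to $\big(|\widehat{\varphi_0}|^2\big)^{\vee}=\delta$. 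Since Lemma~\ref{le 2.10}(iii) gives $\widehat{\delta}=\mathbf{1}$ (the constant function $1$), the inversion identity $(\widehat{f})^{\vee}=f$ applied to $f=\delta$ yields $\delta=(\widehat{\delta})^{\vee}=\mathbf{1}^{\vee}$. Therefore $\big(|\widehat{\varphi_0}|^2\big)^{\vee}=\mathbf{1}^{\vee}$, and by the injectivity of the IDFT this is equivalent to $|\widehat{\varphi_0}(m)|^2=1$ for every $m\in\mathbb{Z}^2_N$, which is exactly the claimed characterization. Both implications come for free from the bijectivity of the Fourier transform, so no separate converse calculation is required.

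For the ``moreover'' assertion I would argue directly from the characterization just obtained. The condition $|\widehat{\varphi_0}(m)|^2=1$ forces every Fourier coefficient of $\varphi_0$ to have modulus $1$, so $\widehat{\varphi_0}$ is spread uniformly over all of $\mathbb{Z}^2_N$ with no coefficient small relative to the others; and by Lemma~\ref{le 2.10}(i) each basis element satisfies $|\widehat{T_k\varphi_0}(m)|=|\widehat{\varphi_0}(m)|=1$ for all $k$ and $m$, so every vector in $\mathfrak{B}(\varphi_0)$ has the same completely delocalized frequency profile. This is precisely the negation of frequency localization in the sense defined in Section~\ref{sec1}.

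The computation itself is short, and I do not expect a genuine obstacle; the only points needing care are bookkeeping ones. First, one must note that $\{T_{Ak}\varphi_0\}$ and $\{T_k\varphi_0\}$ coincide as sets, which rests on $|\det A|=1$ making $k\mapsto Ak$ a bijection of $\mathbb{Z}^2_N$, as already observed before \eqref{eq 2.1}. Second, one must correctly invoke DFT inversion so that the single scalar identity $\big(|\widehat{\varphi_0}|^2\big)^{\vee}=\delta$ cleanly produces the pointwise condition $|\widehat{\varphi_0}(m)|^2=1$; everything else is a routine application of Plancherel's formula and Lemma~\ref{le 2.10}.
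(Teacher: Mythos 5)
Your proposal is correct and follows essentially the same route as the paper: reduce orthonormality to the single condition $\langle\varphi_0,T_k\varphi_0\rangle=\delta(k)$, pass to the Fourier side via Plancherel's formula and Lemma~\ref{le 2.10} to identify this with $\big(|\widehat{\varphi_0}|^2\big)^{\vee}=\delta$, and conclude $|\widehat{\varphi_0}|^2\equiv 1$ by DFT inversion. Your explicit dimension-count reduction and the remark that every translate shares the same unimodular frequency profile are only slightly more detailed versions of what the paper does implicitly.
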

\begin{proof} The system  $\mathfrak{B}(\varphi_0)$ forms an orthonormal basis for $\ell^2({\mathbb{Z}}^2_N)$ if and only if 
 $\delta(k)=\langle{\varphi_0,T_{k}\varphi_0}\rangle $, for $k\in {\mathbb{Z}}^2_N$. By using the
 Plancherel's formula, we can write
\begin{equation*}
\delta(k)= \displaystyle \frac{1}{N^2}\langle{\widehat{\varphi_0},
\widehat{T_{k}\varphi_0}}\rangle
= \displaystyle \frac{1}{N^2}\sum_{n \in {\mathbb{Z}}^2_N}|\widehat{\varphi_0}(n)|^2
e^{2\pi i \langle{n,k}\rangle/N}= {\mathcal{G}}^\vee (k),
\end{equation*}
where  
$\mathcal{G}(n)=|\widehat{\varphi_0}(n)|^2$. Hence, 
the system  $\mathfrak{B}(\varphi_0)$ forms an orthonormal basis for $\ell^2({\mathbb{Z}}^2_N)$ if and only if $
\delta(k)={\mathcal{G}}^\vee(k)$, which is if and only if $|\widehat{\varphi_0}(k)|^2=1
~\mbox{for all}~ k \in {\mathbb{Z}}^2_N,$ in view of Lemma~\ref{le 2.10}. This implies that the vector 
$\varphi_0$ is not frequency localized, and hence the orthonormal basis of the form $\mathfrak{B}(\varphi_0)$ is 
not frequency localized.
 \end{proof}
\noindent

 In order to get 
  time as well as frequency localized orthonormal basis for $\ell^2({\mathbb{Z}}^2_N)$,
 we modify our approach by considering a collection whose elements are translations of 
 two or more vectors. 
 This means, we consider the case when $|I_2|=|\det(A)|>1$. From Theorem~\ref{thm 2.6}, 
 we have $|AI_1|=|A(\mathfrak{D})|=\frac{N^2}{|\det(A)|}$, 
 and hence in this case, we must have   $I_1=\mathfrak{D} \subsetneq \mathbb Z_N^2.$ Therefore, 
 for $|\det(A)|=q\geq 2$ and $\Phi=\{\varphi_p\}_{p=0}^{q-1}
 \subset \ell^2({\mathbb{Z}}^2_N),$ the system in (\ref{eq 1.1}) can be redefined as:
\begin{equation}\label{eq 2.2}
\mathfrak{B}(\Phi) := \{ T_{Ak} \varphi_{p}: k \in \mathfrak{D},\, 0\leq p \leq q-1 \}
\subset \ell^2 ({\mathbb{Z}}^2_N).
\end{equation}

\begin{remark}\label{re 2}
 An important point that one should keep in mind while choosing $\Phi$ in (\ref{eq 2.2}) is 
 to select $\Phi$ in such a way that there should not exist any element
$\psi \in \ell^2({\mathbb{Z}}^2_N)$ 
such that $\{ T_{k}\psi: k \in {\mathbb{Z}}^2_N \} = \mathfrak{B}(\Phi),$ otherwise $\mathfrak{B}(\Phi)$ will become similar
with the system (\ref{eq 2.1}), and  Theorem~\ref{thm 2.11} says that 
orthonormal basis of this form is not frequency localized.
For a better explanation of this fact, we have the 
following result which is also   useful in the sequel:
\end{remark}
\begin{proposition} \label{le 2.9}
 Let $\{ e_{m}\}_{m \in {\mathbb{Z}}^2_N}$ be a standard orthonormal basis for $\ell^2({\mathbb{Z}}^2_N)$, 
 where for each $j \in {\mathbb{Z}}^2_N,$ we define $e_j(n) = 1 ~\mbox{if}~ n = j$ and $0$ otherwise.
 Then, for $ N \in \mathbb{N}$ and $A \in {GL}(2, \mathbb{R})$ such that $A {\mathbb{Z}}^2 \subseteq{\mathbb{Z}}^2$ with
 $|\det(A)|=q\geq 2$, there exists $\{ \gamma_j\}_{j=0}^{q-1} \subset {\mathbb{Z}}^2_N$ for which 
 $\{ T_{Ak}e_{\gamma_j}: k \in \mathfrak{D},\, 0\leq j \leq q-1 \}=\{ T_k e_{n}\}_{k \in {\mathbb{Z}}^2_N}$, 
 for some $n \in {\mathbb{Z}}^2_N$. 
\end{proposition}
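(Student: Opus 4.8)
The plan is to reduce everything to the action of translation on the standard basis and then exploit the coset structure of $A{\mathbb{Z}}^2_N$ inside ${\mathbb{Z}}^2_N$ recorded in Theorem~\ref{thm 2.6}. The first observation I would make is that translation merely relabels standard basis vectors: for any $\ell, k \in {\mathbb{Z}}^2_N$ one has $(T_k e_\ell)(m) = e_\ell(m-k) = 1$ exactly when $m = \ell + k$, so $T_k e_\ell = e_{\ell+k}$. In particular the right-hand side satisfies $\{T_k e_n : k \in {\mathbb{Z}}^2_N\} = \{e_{n+k} : k \in {\mathbb{Z}}^2_N\} = \{e_m : m \in {\mathbb{Z}}^2_N\}$ for \emph{every} choice of $n$, that is, it is always the full standard basis. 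It therefore suffices to take $n = (0,0)^t$ and to produce $\{\gamma_j\}_{j=0}^{q-1}$ making the left-hand side equal to $\{e_m : m \in {\mathbb{Z}}^2_N\}$.

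Applying the same relabeling to the left-hand side gives $T_{Ak} e_{\gamma_j} = e_{\gamma_j + Ak}$, so the whole question becomes purely set-theoretic: choose $\gamma_0, \ldots, \gamma_{q-1} \in {\mathbb{Z}}^2_N$ so that the index family $\{\gamma_j + Ak : k \in \mathfrak{D},\, 0 \leq j \leq q-1\}$ is exactly ${\mathbb{Z}}^2_N$. Here I would invoke Theorem~\ref{thm 2.6}: part (ii) gives $A(\mathfrak{D}) = A{\mathbb{Z}}^2_N$, while the injectivity of $A$ on $\mathfrak{D}$ together with part (i) gives $|A{\mathbb{Z}}^2_N| = |\mathfrak{D}| = N^2/q$; on the other hand $\big|{\mathbb{Z}}^2_N / A{\mathbb{Z}}^2_N\big| = |\det A| = q$ by Theorem~\ref{thm}.

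The natural choice is to let $\{\gamma_j\}_{j=0}^{q-1}$ be a complete set of coset representatives for $A{\mathbb{Z}}^2_N$ in ${\mathbb{Z}}^2_N$, which is possible precisely because there are $q$ such cosets. Then the coset decomposition ${\mathbb{Z}}^2_N = \bigcup_{j=0}^{q-1}(\gamma_j + A{\mathbb{Z}}^2_N)$ is a disjoint union, and substituting $A{\mathbb{Z}}^2_N = A(\mathfrak{D}) = \{Ak : k \in \mathfrak{D}\}$ rewrites each coset as $\gamma_j + A{\mathbb{Z}}^2_N = \{\gamma_j + Ak : k \in \mathfrak{D}\}$. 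Taking the union over $j$ yields $\{\gamma_j + Ak : k \in \mathfrak{D},\, 0 \leq j \leq q-1\} = {\mathbb{Z}}^2_N$, whence $\{e_{\gamma_j + Ak}\} = \{e_m : m \in {\mathbb{Z}}^2_N\} = \{T_k e_{(0,0)^t}\}_{k \in {\mathbb{Z}}^2_N}$, which is the asserted equality.

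I do not expect a serious obstacle; the only point requiring a little care is the bookkeeping of cardinalities needed to guarantee that no index is repeated, i.e. that the displayed family genuinely has $N^2$ distinct members rather than fewer. This is handled by the disjointness of the cosets together with the property $Ad_1 \neq Ad_2$ for $d_1 \neq d_2 \in \mathfrak{D}$, so that each coset $\gamma_j + A(\mathfrak{D})$ already consists of $N^2/q$ distinct points, forcing the total count to be exactly $q \cdot (N^2/q) = N^2$.
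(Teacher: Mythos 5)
Your proposal is correct and follows essentially the same route as the paper: reduce to the identity $T_{Ak}e_{\gamma_j}=e_{\gamma_j+Ak}$, take the $\gamma_j$ to be a complete set of coset representatives of $A\mathbb{Z}^2_N$ in $\mathbb{Z}^2_N$ (the set $\mathfrak{D}_0$ in the paper), and use $A(\mathfrak{D})=A\mathbb{Z}^2_N$ together with the coset decomposition to cover all of $\mathbb{Z}^2_N$. Your explicit remark on the cardinality bookkeeping is a small clarification the paper leaves implicit, but the argument is the same.
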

\begin{proof} Let $n \in {\mathbb{Z}}^2_N$. Then, both the collections $\{ T_k e_{n}\}_{k \in {\mathbb{Z}}^2_N}$ 
and $\{ e_{m}\}_{m \in {\mathbb{Z}}^2_N}$ are same, and hence we claim that 
$\{ T_{Ak}e_{\gamma_j}: k \in \mathfrak{D},\, 0\leq j \leq q-1 \}= \{ e_{m}\}_{m \in {\mathbb{Z}}^2_N}$. 
For this, it is enough to show that there exists 
$\{ \gamma_j\}_{j=0}^{q-1} \subset {\mathbb{Z}}^2_N$ such that the collection
$\{ {\gamma_j+ Ak}: k \in \mathfrak{D},\, 0\leq j \leq q-1 \}= {\mathbb{Z}}^2_N$, since 
 for $k \in \mathfrak{D}$ and $ 0\leq j \leq q-1,$ we have $ T_{Ak}e_{\gamma_j}= e_{(\gamma_j+ Ak)}$.
 Now, from Theorem \ref{thm}, we have 
  $\Big|\displaystyle \frac{{\mathbb{Z}}^2_N}{A{\mathbb{Z}}^2_N}\Big| =|\det (A)|=q$, and hence,  we have $q$ distinct coset representatives of   $ {A {\mathbb{Z}}^2_N}$ in ${\mathbb{Z}}^2_N$, say, $\{\overline{\alpha}_j\}_{j=0}^{q-1}$, 
  where for
  $0\leq j \leq q-1,~\mbox{we define}~\overline{\alpha}_j= {\alpha}_j + A{\mathbb{Z}}^2_N$. 
    Here, the set  
    $\{ \alpha_j\}_{j=0}^{q-1}=: 
    \mathfrak{D}_0  \subseteq ({\mathbb{Z}}^2_N\setminus A{\mathbb{Z}}^2_N) \cup \{(0,0)^t\}$   has  following properties:  
   ${\alpha}_{j_1} \neq {\alpha}_{j_2} ~\mbox{and}~
   ~ ({\alpha}_{j_1} + A{\mathbb{Z}}^2_N)\cap ({\alpha}_{j_2} + A {\mathbb{Z}}^2_N)= \phi$, 
   for $0\leq j_1 \neq j_2 \leq q-1$.  
   Hence, $|\mathfrak{D}_0|=\Big|\displaystyle \frac{{\mathbb{Z}}^2_N}{A{\mathbb{Z}}^2_N}\Big| ~\mbox{and we can write}, \,
   \displaystyle{\mathbb{Z}}^2_N=
  \bigcup_{\alpha \,\in \,\mathfrak{D}_0}({\alpha + A{\mathbb{Z}}^2_N})=
  \{ \alpha +  Ak: \alpha \in \mathfrak{D}_0,\, k \in \mathfrak{D} \}$. This implies that there exists 
  $\{ \alpha_j\}_{j=0}^{q-1}= 
   \mathfrak{D}_0 \subset {\mathbb{Z}}^2_N $ such that 
   $\{ T_{Ak}e_{\alpha_j}: k \in \mathfrak{D},\, 0\leq j \leq q-1 \}= \{ e_{m}\}_{m \in {\mathbb{Z}}^2_N}$. 
\end{proof}
Next, we are  defining  an orthonormal wavelet system for $\ell^2({\mathbb{Z}}^2_N)$:

\begin{definition}
\label{de 2.12} {\bf{Orthonormal Wavelet System:}} We call the system $\mathfrak{B}(\Phi)$, defined in 
(\ref{eq 2.2}), an {\it{orthonormal wavelet system (ONWS)}} for $\ell^2({\mathbb{Z}}^2_N)$, 
if it forms an orthonormal basis for 
$\ell^2({\mathbb{Z}}^2_N).$
\end{definition}
Following is our main  result of this section which characterizes $\Phi \subset \ell^2 (\mathbb Z_N^2)$ such that 
the system $\mathfrak{B}(\Phi)$ defined in (\ref{eq 2.2}) provides  an ONWS for $\ell^2({\mathbb{Z}}^2_N)$: 
\begin{theorem}\label{thm 2.15}
Let $~A \in {GL}(2, \mathbb{R})$ be a matrix such that $A {\mathbb{Z}}^2 \subseteq{\mathbb{Z}}^2 $
   and let   $C=(N {A}^{-1})^t \in {GL}(2, \mathbb{Z})$ for some $ N \in \mathbb{N}.$ Consider
   $\Phi=\{\varphi_{p}\}_{p=0}^{q-1} \subset \ell^2({\mathbb{Z}}^2_N)$, 
where $|\det(A)|= q\geq 2.$ Then, the following statements are equivalent:
   \begin{itemize}
    \item [(i)] $\mathfrak{B}(\Phi)\subset \ell^2({\mathbb{Z}}^2_N)$ forms an ONWS in 
    $\ell^2({\mathbb{Z}}^2_N)$. 
 \item[(ii)] 
 For all $k \in \mathfrak{D} ~\mbox{and}~ (p_1,p_2)
  \in \{(m,n):\, 0\leq m\leq n\leq q-1\},$
$$ \displaystyle \sum_{\gamma \in C {\mathbb{Z}}^2_N}\widehat{\varphi}_{p_1}(k+\gamma)
\overline{\widehat{\varphi}_{p_2}(k+ \gamma)}= q \delta_{p_1p_2}.$$
     \item[(iii)] The system matrix $\mathcal{S}_{\Phi}(k)$ of $\Phi$ is unitary for 
    each $k \in \mathfrak{D},$ where 
 \[
\displaystyle \mathcal{S}_{\Phi}(k)=\frac{1}{\sqrt{q}}
{{\begin{pmatrix}
\widehat{\varphi_{p}}(k +\gamma)
\end{pmatrix}}}_{\substack{
   \gamma \in C{\mathbb{Z}}^2_N \\
   0\leq p \leq q-1
  }}.
  \]
\end{itemize}
\end {theorem}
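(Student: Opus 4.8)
The plan is to prove (i)$\Leftrightarrow$(ii) and then the easier (ii)$\Leftrightarrow$(iii); the substance lies in the first equivalence. First I would use the dimension count already in hand: $|\mathfrak B(\Phi)|=|A\mathfrak D|\,q=|\mathfrak D|\,q=\frac{N^2}{q}\cdot q=N^2=\dim\ell^2(\mathbb Z_N^2)$, so $\mathfrak B(\Phi)$ is an orthonormal basis precisely when it is an orthonormal set. Hence (i) is equivalent to $\langle T_{Ak_1}\varphi_{p_1},\,T_{Ak_2}\varphi_{p_2}\rangle=\delta_{k_1k_2}\,\delta_{p_1p_2}$ for all $k_1,k_2\in\mathfrak D$ and $0\le p_1,p_2\le q-1$. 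Lemma~\ref{le 2.10}(ii) rewrites the left side as $\langle\varphi_{p_1},\,T_{Ak_2-Ak_1}\varphi_{p_2}\rangle$, and Theorem~\ref{thm 2.6}(vi) lets me replace $Ak_2-Ak_1$ by $A\beta$ for a unique $\beta\in\mathfrak D$, with $\beta=(0,0)^t$ exactly when $k_1=k_2$. Since $(0,0)^t\in\mathfrak D$, every $\beta\in\mathfrak D$ is attained (take $k_1=(0,0)^t,\ k_2=\beta$), so (i) is equivalent to the single family of identities
$$\langle\varphi_{p_1},\,T_{A\beta}\varphi_{p_2}\rangle=\delta_{\beta,(0,0)^t}\,\delta_{p_1p_2},\qquad \beta\in\mathfrak D,\ 0\le p_1,p_2\le q-1.$$

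Next I would pass to the frequency domain. By Plancherel's formula together with Lemma~\ref{le 2.10}(i),
$$\langle\varphi_{p_1},\,T_{A\beta}\varphi_{p_2}\rangle=\frac{1}{N^2}\sum_{m\in\mathbb Z_N^2}\widehat{\varphi_{p_1}}(m)\,\overline{\widehat{\varphi_{p_2}}(m)}\,e^{2\pi i\langle m,A\beta\rangle/N}.$$
The decisive algebraic fact is the identity $A^tC=NI$ (equivalently $C^tA=NI$), immediate from $C=B^t$ and $B=NA^{-1}$. It yields $\langle A\beta,\gamma\rangle\equiv 0\ (\mathrm{mod}\ N)$ for every $\gamma\in C\mathbb Z_N^2$, so the exponential $m\mapsto e^{2\pi i\langle m,A\beta\rangle/N}$ is constant along each coset of $C\mathbb Z_N^2$. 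Writing $\mathbb Z_N^2=\bigcup_{k\in\mathfrak D^*}(k+C\mathbb Z_N^2)$ and splitting the sum accordingly factors the Gramian $G_{p_1p_2}(k):=\sum_{\gamma\in C\mathbb Z_N^2}\widehat{\varphi_{p_1}}(k+\gamma)\overline{\widehat{\varphi_{p_2}}(k+\gamma)}$ out of the inner sum, leaving
$$\langle\varphi_{p_1},\,T_{A\beta}\varphi_{p_2}\rangle=\frac{1}{N^2}\sum_{k\in\mathfrak D^*}e^{2\pi i\langle k,A\beta\rangle/N}\,G_{p_1p_2}(k).$$

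The crux, and what I expect to be the main obstacle, is inverting this last relation to isolate $G_{p_1p_2}(k)$. Here I would invoke Theorem~\ref{thm 2.6}: the assignment $\beta\mapsto A\beta$ is a bijection of $\mathfrak D$ onto $A\mathbb Z_N^2$, and, again by $A^tC=NI$, the subgroup $A\mathbb Z_N^2$ is exactly the annihilator of $C\mathbb Z_N^2$, i.e.\ the full dual group of $\mathbb Z_N^2/C\mathbb Z_N^2$, which has order $N^2/q=|\mathfrak D^*|$. Consequently $\{e^{2\pi i\langle\,\cdot\,,A\beta\rangle/N}\}_{\beta\in\mathfrak D}$, restricted to the transversal $\mathfrak D^*$, is an orthogonal family of characters, so the $|\mathfrak D^*|\times|\mathfrak D^*|$ matrix $\big(e^{2\pi i\langle k,A\beta\rangle/N}\big)_{\beta\in\mathfrak D,\,k\in\mathfrak D^*}$ is a scalar multiple of a unitary. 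The orthogonality relations for the characters of $\mathbb Z_N^2/C\mathbb Z_N^2$ then let me solve the linear system: the displayed family of identities above holds if and only if $G_{p_1p_2}(k)=q\,\delta_{p_1p_2}$ for every coset representative $k$, where $\beta=(0,0)^t\in\mathfrak D$ supplies the trivial character and $G_{p_1p_2}$ is $C\mathbb Z_N^2$-periodic in $k$. This is precisely statement (ii), giving (i)$\Leftrightarrow$(ii).

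Finally, (ii)$\Leftrightarrow$(iii) is an unpacking of the unitarity of the system matrix. Since $|C\mathbb Z_N^2|=q$, the matrix $\mathcal S_\Phi(k)$ is $q\times q$, and the $(p_1,p_2)$ entry of $\mathcal S_\Phi(k)^{*}\mathcal S_\Phi(k)$ equals $\frac1q\sum_{\gamma\in C\mathbb Z_N^2}\overline{\widehat{\varphi_{p_1}}(k+\gamma)}\,\widehat{\varphi_{p_2}}(k+\gamma)$, which is the complex conjugate of $\frac1q$ times the sum in (ii). A square matrix is unitary iff its columns are orthonormal, that is iff $\mathcal S_\Phi(k)^{*}\mathcal S_\Phi(k)=I_q$; as this product is Hermitian it suffices to verify the entries with $p_1\le p_2$, which is exactly the index range appearing in (ii). Hence (ii) and (iii) assert the same condition for each $k\in\mathfrak D$, and the theorem follows.
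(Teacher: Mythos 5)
Your proposal is correct and follows essentially the same route as the paper: Plancherel's formula, the coset decomposition $\mathbb{Z}_N^2=\bigcup_{m\in\mathfrak{D}^*}(m+C\mathbb{Z}_N^2)$ combined with the fact that $\langle A\beta,\gamma\rangle\in N\mathbb{Z}$ for $\gamma\in C\mathbb{Z}_N^2$ (the paper's Lemma~\ref{lemma 2.1}), and then inversion via the orthogonality of the characters $e^{2\pi i\langle\cdot,A\beta\rangle/N}$ on $\mathfrak{D}^*$ (the paper's Proposition~\ref{thm 2.5}, which you rephrase as an annihilator/dual-group statement). Your explicit reduction of orthonormality to the single family $\langle\varphi_{p_1},T_{A\beta}\varphi_{p_2}\rangle=\delta_{\beta,(0,0)^t}\delta_{p_1p_2}$ and the unitarity unpacking for (ii)$\Leftrightarrow$(iii) likewise match the paper's argument.
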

 For this, we need to set up   the background in the form of following results:
 \begin{lemma}
 \label{lemma 2.1} For $\alpha \in A {\mathbb{Z}}^2_N$ and 
 $\beta \in C{\mathbb{Z}}^2_N$, the inner product $\langle{\alpha, \beta}\rangle \in N \mathbb{Z}$, where $A~\mbox{and}~C$
 are as defined in Theorem~\ref{thm 2.15}.
   \end{lemma}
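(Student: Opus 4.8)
The plan is to reduce the claim to the single matrix identity $A^t C = N I$ and then read off the divisibility from a bilinear-form computation. First I would fix integer representatives: since $\alpha \in A{\mathbb{Z}}^2_N$ and $\beta \in C{\mathbb{Z}}^2_N$, there are $a,b \in {\mathbb{Z}}^2$ with $\alpha \equiv Aa \pmod{N}$ and $\beta \equiv Cb \pmod{N}$, understood componentwise. Because congruence modulo $N$ is preserved under sums and products, the dot product satisfies $\langle \alpha,\beta\rangle = \alpha_1\beta_1 + \alpha_2\beta_2 \equiv (Aa)_1(Cb)_1 + (Aa)_2(Cb)_2 = \langle Aa, Cb\rangle \pmod{N}$, so it suffices to show $\langle Aa, Cb\rangle \in N\mathbb{Z}$.

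Next I would carry out the bilinear computation. Writing the dot product as $\langle Aa, Cb\rangle = (Aa)^t(Cb) = a^t (A^t C)\, b$, the whole problem collapses to evaluating $A^t C$. Using $C = B^t$ with $B = NA^{-1}$ (both from the hypotheses of Theorem~\ref{thm 2.15}), I get $A^t C = A^t B^t = (BA)^t = (N A^{-1} A)^t = (NI)^t = NI$. Substituting back yields $\langle Aa, Cb\rangle = a^t (NI)\, b = N\,(a_1 b_1 + a_2 b_2) \in N\mathbb{Z}$, and combining this with the congruence from the first step gives $\langle \alpha,\beta\rangle \in N\mathbb{Z}$, as required.

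Honestly, there is no genuine obstacle here: the statement is essentially the assertion that $A{\mathbb{Z}}^2$ and $C{\mathbb{Z}}^2 = N(A^{-1})^t{\mathbb{Z}}^2$ are $N$-dual lattices, so the content lies entirely in the transpose identity $A^t C = NI$. The only point demanding a little care is the passage from the vectors $\alpha,\beta \in {\mathbb{Z}}^2_N$ (taken with entries in $\{0,1,\ldots,N-1\}$) to their integer images $Aa$ and $Cb$; I would make explicit that replacing $\alpha$ by $Aa$ and $\beta$ by $Cb$ alters each coordinate only by a multiple of $N$, and hence leaves the dot product unchanged modulo $N$, which is precisely what legitimizes the reduction used in the first step.
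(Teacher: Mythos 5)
Your proof is correct and follows essentially the same route as the paper: both arguments reduce the claim to the identity $\langle An, Cm\rangle = n^t(A^tC)m = N\langle n,m\rangle$, the paper by multiplying out the entries of $A$ and $C=(NA^{-1})^t$ explicitly and you by the cleaner observation $A^tC=(BA)^t=NI$. Your extra care about passing from the mod-$N$ representatives $\alpha,\beta$ to integer vectors $Aa, Cb$ is a small tightening that the paper's proof silently skips.
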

   \begin{proof}
  Let $\alpha=(\alpha_1,\alpha_2)^t \in A {\mathbb{Z}}^2_N~\mbox{and}~\beta=(\beta_1,\beta_2)^t
   \in C {\mathbb{Z}}^2_N.~\mbox{Then, there exist}~ n=
   (n_1,n_2)^t$ and $ m=(m_1,m_2)^t\in {\mathbb{Z}}^2_N ~\mbox{such that}~ \alpha=An ~\mbox{and}~ \beta= C m$, where
   $~A = \Big({\begin{matrix}
~a&b~ \\
~ c&d ~
\end{matrix}}\Big)\in {GL}(2, \mathbb{R})$ is a matrix with $A {\mathbb{Z}}^2 \subseteq{\mathbb{Z}}^2$ and
  $ C= \Bigg({\begin{matrix}
~\frac{Nd}{\det(A)}& \frac{-Nc}{\det(A)}~ \\
~  \frac{-Nb}{\det(A)}& \frac{Na}{\det(A)} ~
\end{matrix}}\Bigg) \in{GL}(2, \mathbb{Z}).$
  Now, the result follows by observing that
$
  \langle{\alpha,\beta}\rangle
  = \Bigg \langle{\Big({\begin{matrix}
~a&b~ \\
~ c&d ~
\end{matrix}}\Big)\Big({\begin{matrix}
~n_1~ \\
~ n_2 ~
\end{matrix}}\Big), \Bigg({\begin{matrix} 
~\frac{Nd}{\det(A)}&  \frac{-Nc}{\det(A)}~ \\
~  \frac{-Nb}{\det(A)}&  \frac{Na}{\det(A)} ~
\end{matrix}}\Bigg)\Big({\begin{matrix}
~m_1~ \\
~ m_2 ~
\end{matrix}}\Big)}\Bigg \rangle 
= N p,  ~\mbox{for}~ p = n_1m_1+n_2m_2 \in \mathbb{Z}.  
 $ \end{proof}
\begin{proposition}
\label{thm 2.5} Let
 $\{{E}_{k}\}_{k \in \mathfrak{D}} \subseteq \ell^2(\mathfrak{D}^*)$,
 where $\mbox{for each}~ k \in \mathfrak{D}$ and $A$ as defined in Theorem~\ref{thm 2.15}, we define
  $E_k(m)= \frac{1}{\sqrt{|\mathfrak{D}^*|}} 
  e^{2 \pi i {\langle{m, A k}\rangle}/N}$ for all $ m \in \mathfrak{D}^*$. Then, the system $\{{E}_{k}\}_{k \in \mathfrak{D}}$ 
 forms an orthonormal basis 
 for $\ell^2(\mathfrak{D}^*)$.
\end{proposition}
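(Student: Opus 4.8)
The plan is to exploit a dimension count: the family $\{E_k\}_{k\in\mathfrak{D}}$ has exactly $|\mathfrak{D}|$ elements, while $\ell^2(\mathfrak{D}^*)$ is a Hilbert space of dimension $|\mathfrak{D}^*|$, and by the discussion following Theorem~\ref{thm} we have $|\mathfrak{D}|=|\mathfrak{D}^*|=r=N^2/|\det(A)|$. Consequently it suffices to prove that $\{E_k\}_{k\in\mathfrak{D}}$ is an \emph{orthonormal} set, since any orthonormal family whose cardinality equals the dimension of the space is automatically an orthonormal basis.

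First I would compute, for $k,k'\in\mathfrak{D}$, the inner product in $\ell^2(\mathfrak{D}^*)$:
\[
\langle E_k,E_{k'}\rangle=\frac{1}{|\mathfrak{D}^*|}\sum_{m\in\mathfrak{D}^*}e^{2\pi i\langle m,\,Ak-Ak'\rangle/N}.
\]
By Theorem~\ref{thm 2.6}(vi) there is $\beta\in\mathfrak{D}$ with $Ak-Ak'=A\beta$, and moreover $\beta=(0,0)^t$ precisely when $k=k'$. Thus everything reduces to evaluating the character sum $\Sigma(\beta):=\sum_{m\in\mathfrak{D}^*}e^{2\pi i\langle m,A\beta\rangle/N}$ and showing that it equals $|\mathfrak{D}^*|$ when $\beta=(0,0)^t$ and $0$ otherwise. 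The case $\beta=(0,0)^t$ is immediate, as every summand is $1$.

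The crux is the case $\beta\neq(0,0)^t$. Here I would first invoke Lemma~\ref{lemma 2.1} with $A\beta\in A\mathbb{Z}_N^2$ and an arbitrary $c\in C\mathbb{Z}_N^2$ to get $\langle c,A\beta\rangle\in N\mathbb{Z}$, so that the character $m\mapsto e^{2\pi i\langle m,A\beta\rangle/N}$ is constant on every coset $m'+C\mathbb{Z}_N^2$. Since $\mathbb{Z}_N^2=\bigcup_{m'\in\mathfrak{D}^*}(m'+C\mathbb{Z}_N^2)$ is a disjoint union, summing the character over the whole of $\mathbb{Z}_N^2$ then gives $|C\mathbb{Z}_N^2|\,\Sigma(\beta)$. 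On the other hand, the standard orthogonality of characters on $\mathbb{Z}_N^2$ (the sum factors into two one-dimensional geometric sums, each of which is $N$ or $0$) shows that $\sum_{m\in\mathbb{Z}_N^2}e^{2\pi i\langle m,A\beta\rangle/N}$ equals $N^2$ if $A\beta=(0,0)^t$ in $\mathbb{Z}_N^2$ and $0$ otherwise. Because $\beta\in\mathfrak{D}$ with $\beta\neq(0,0)^t$, Theorem~\ref{thm 2.6}(iii) guarantees $A\beta\neq(0,0)^t$, so the full-group sum vanishes; dividing by the nonzero integer $|C\mathbb{Z}_N^2|$ forces $\Sigma(\beta)=0$.

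Combining the two cases yields $\langle E_k,E_{k'}\rangle=\delta_{kk'}$, which is orthonormality, and the dimension count from the first paragraph upgrades this to an orthonormal basis. The step I expect to be the main obstacle is justifying the passage from the sum over the coset representatives $\mathfrak{D}^*$ to the sum over the full group $\mathbb{Z}_N^2$: this relies entirely on Lemma~\ref{lemma 2.1} to know the character is well defined and constant on $C\mathbb{Z}_N^2$-cosets, together with the structural facts of Theorem~\ref{thm 2.6} that rewrite $Ak-Ak'$ as some $A\beta$ with $\beta\in\mathfrak{D}$ and pin down exactly when $A\beta$ vanishes.
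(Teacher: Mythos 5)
Your argument is correct, and it reaches the same reduction as the paper (compute $\langle E_k,E_{k'}\rangle$, rewrite $Ak-Ak'$ as $A\beta$ via Theorem~\ref{thm 2.6}(vi), and show the character sum over $\mathfrak{D}^*$ is $|\mathfrak{D}^*|\delta_{kk'}$), but the mechanism you use for the vanishing of the sum when $\beta\neq(0,0)^t$ is genuinely different. The paper uses the intrinsic shift trick: multiply the sum by $e^{2\pi i\langle m_1,A\beta\rangle/N}$ for an arbitrary $m_1\in\mathfrak{D}^*$, substitute $m+m_1=m_2$, conclude that either every character value is $1$ or the sum is $0$, and then identify the first alternative with $\beta\in NA^{-1}\mathbb{Z}^2\cap\mathfrak{D}=\{(0,0)^t\}$. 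You instead lift the sum to all of $\mathbb{Z}_N^2$ via the coset decomposition $\mathbb{Z}_N^2=\bigcup_{m'\in\mathfrak{D}^*}(m'+C\mathbb{Z}_N^2)$, use Lemma~\ref{lemma 2.1} to see the character is constant on $C\mathbb{Z}_N^2$-cosets, and then invoke the standard factorized geometric-sum orthogonality on the full group together with Theorem~\ref{thm 2.6}(iii) to kill the lifted sum. A notable virtue of your route is that it makes the role of Lemma~\ref{lemma 2.1} explicit: the paper's substitution $m\mapsto m+m_1$ does not literally map $\mathfrak{D}^*$ onto itself, and the step is only justified because the character is constant on $C\mathbb{Z}_N^2$-cosets (so any complete set of representatives gives the same sum) --- exactly the fact you isolate. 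What the paper's version buys in exchange is brevity and independence from the explicit coset structure; yours reduces everything to the most elementary computation on $\mathbb{Z}_N^2$ and is, if anything, easier to verify line by line. The concluding dimension count ($|\mathfrak{D}|=|\mathfrak{D}^*|$, so an orthonormal family of that size is a basis) matches what the paper leaves implicit.
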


\begin{proof}
Observe that for $k_1,k_2 \in \mathfrak{D} $, we have
 \begin{equation}\label{eq 2.4}
 \langle{E_{k_1},E_{k_2}} \rangle =
  \displaystyle \sum_{m \in \mathfrak{D}^*} E_{k_1}(m) 
  \overline{E_{k_2}(m)}
 = \frac{1}{|\mathfrak{D}^*|} \sum_{m \in \mathfrak{D}^*}
 e^{2 \pi i \langle{m, (Ak_1-Ak_2)}\rangle/N}.
   \end{equation}
 We claim that
  \begin{equation}\label{eq 2.0}
 \sum_{m \in \mathfrak{D}^*} e^{2 \pi i \langle{m, (Ak_1-Ak_2)}\rangle/N}
   = \left\{
   \begin{array}{lr}
    |\mathfrak{D}^*|  \quad  \mbox{for} ~k_1=k_2,\\   
    0  \quad  \quad \ \mbox{otherwise} .
   \end{array}
   \right.
       \end{equation}
   For this, let $m_1 \in \mathfrak{D}^*$ be an arbitrary element. Then, we can write
   \begin{align*}
      e^{2 \pi i \langle{m_1, (Ak_1-Ak_2)}\rangle/N}\sum_{m \in \mathfrak{D}^*}
      e^{2 \pi i \langle{m, (Ak_1-Ak_2)}\rangle/N}
      =  \sum_{m \in \mathfrak{D}^*} e^{2 \pi i \langle{(m+m_1), 
      (Ak_1-Ak_2)
      }\rangle/N}. 
       \end{align*}
  Now, in view of Theorem~\ref{thm 2.6}, we have $Ak_1-Ak_2=Ak$, for some $k \in \mathfrak{D}$. Therefore,   by substituting $m+m_1=m_2$, we get  
      \begin{align*}
      e^{2 \pi i \langle{m_1, Ak}\rangle/N}
      \sum_{m \in \mathfrak{D}^*} 
      e^{2 \pi i \langle{m, Ak}\rangle/N}
      &=\sum_{m \in \mathfrak{D}^*} e^{2 \pi i \langle{m + m_1, Ak}\rangle/N}
       =\sum_{m_2 \in \mathfrak{D}^*} e^{2 \pi i \langle{m_2, Ak}\rangle/N},
    \end{align*} 
     which implies that either $e^{2 \pi i \langle{m_1, Ak}\rangle/N}=1$, or,
       $ \sum \limits_{m \in \mathfrak{D}^*} e^{2 \pi i \langle{m, Ak}\rangle/N}=0.$ But,
       $ e^{2 \pi i \langle{m_1, Ak}\rangle/N}=1$ if and only if
       $\langle{m_1,Ak}\rangle \in N\mathbb{Z},$
        which is if and only if we have
       $Ak \in N{\mathbb{Z}}^2 
       ~\mbox{as}~ m_1 \in \mathfrak{D}^* ~\mbox{is arbitrary}$. Equivalently, we can say that $e^{2 \pi i \langle{m_1, Ak}\rangle/N}=1$ 
        if and only if $k \in NA^{-1}{\mathbb{Z}}^2 \cap \mathfrak{D}= \{(0,0)^t\},$ which in view of 
       Theorem~\ref{thm 2.6} implies that $k_1=k_2.$  
Now, by using (\ref{eq 2.0}) and  (\ref{eq 2.4}), we get $\langle{E_{k_1},E_{k_2}} \rangle = \delta_{k_1 k_2}$. 
Hence the result follows.
\end{proof}
\begin{proof}[Proof of Theorem~\ref{thm 2.15}]
The system $\mathfrak{B}(\Phi)$ will form an orthonormal basis for $\ell^2({\mathbb{Z}}^2_N)$ if and only if for
$k \in \mathfrak{D}~\mbox{and} ~\varphi_{p_1},\varphi_{p_2} \in \Phi,~\mbox{where}~ 0 \leq p_1,p_2\leq q-1$, we have
\begin{align*}
\delta_{p_1p_2}\delta(k)=\langle{\varphi_{p_1},T_{Ak} \varphi_{p_2}}\rangle 
&=\displaystyle \frac{1}{N^2}\langle{\widehat{\varphi_{p_1}}, 
\widehat{T_{Ak} \varphi_{p_2}}}\rangle 
= \displaystyle \frac{1}{N^2}\sum_{n \in {\mathbb{Z}}^2_N}\widehat{\varphi_{p_1}}(n)\overline{\widehat{\varphi_{p_2}}(n)}
e^{2\pi i \langle{n, Ak}\rangle/N},
\end{align*}
in view of the Plancherel's formula. Next, by applying  
Lemma~\ref{lemma 2.1} in the above equation, we get 
\begin{align*}
\delta_{p_1p_2}\delta(k)&=\displaystyle \frac{1}{N^2}\sum_{m \in \mathfrak{D}^*} \sum_{\gamma \in C{\mathbb{Z}}^2_N}
\widehat{\varphi_{p_1}}(m+\gamma)\overline{\widehat{\varphi_{p_2}}(m+ \gamma)}
e^{2\pi i \langle{(m+ \gamma), Ak}\rangle/N}\\
&=\displaystyle \frac{1}{N^2}\sum_{m \in \mathfrak{D}^*}
\sum_{\gamma \in C{\mathbb{Z}}^2_N}\widehat{\varphi_{p_1}}(m+\gamma)\overline{\widehat{\varphi_{p_2}}(m+ \gamma)} 
e^{2\pi i \langle{m, Ak}\rangle/N}\\
&= \displaystyle \frac{1}{q|\mathfrak{D}^*|}\sum_{m \in \mathfrak{D}^*}
\Psi(m)e^{2\pi i\langle{m, Ak}\rangle/N},
\end{align*}
where 
$\displaystyle \Psi(m)=\sum_{\gamma \in C{\mathbb{Z}}^2_N}
\widehat{\varphi_{p_1}}(m+\gamma)\overline{\widehat{\varphi_{p_2}}(m+ \gamma)},$ for all $m \in \mathfrak{D}^*,$
and hence, for all $
k \in \mathfrak{D}, 
$ we have  $ 
 \delta_{p_1p_2}\delta(k)= \displaystyle \frac{1}{q}{\mathcal{F}}^{-1}(\Psi(k)),$   where  
${\mathcal{F}}^{-1}$ denotes the Inverse discrete Fourier transform on $\ell^2(\mathfrak{D}^*)$.  Next,  
 by noting   $ 
 (\Psi(m))_{m \in \mathfrak{D}^*} \in \ell^2(\mathfrak{D}^*)$,  
    Proposition~\ref{thm 2.5} and Lemma~\ref{le 2.10}, we get $\Psi(k)=q\delta_{p_1p_2}$.  
Hence, the system $\mathfrak{B}(\Phi)$ forms an ONWS in $\ell^2({\mathbb{Z}}^2_N)$ if and only if for
$k \in \mathfrak{D}~\mbox{and}~\varphi_{p_1},\varphi_{p_2} \in \Phi,$ where $ 0 \leq p_1,p_2\leq q-1$, we have
\begin{equation*}
 \mathfrak{R}_{p_1p_2}:=\displaystyle \sum_{\gamma \in C{\mathbb{Z}}^2_N}
\widehat{\varphi_{p_1}}(k+\gamma)\overline{\widehat{\varphi_{p_2}}(k+ \gamma)}=q\delta_{p_1p_2} ~\mbox{for all}~k \in \mathfrak{D}.
\end{equation*}
Further, we note that $\mathfrak{R}_{p_2p_1}=\overline{\mathfrak{R}}_{p_1p_2}$, which proves (i) $\Leftrightarrow$ (ii) part. 
Observe that above equation is equivalent to the fact that for $k \in \mathfrak{D}$, columns of $\mathcal{S}_{\Phi}(k)$, 
the system matrix of $\Phi$ having 
order $ q \times q$, defined by 
\[
\displaystyle \mathcal{S}_{\Phi}(k)=\frac{1}{\sqrt{q}}
{{\begin{pmatrix}
\widehat{\varphi_{p}}(k +\gamma)
\end{pmatrix}}}_{\substack{
   \gamma \in C{\mathbb{Z}}^2_N \\
   0\leq p \leq q-1
  }} ,
  \]
  forms an orthonormal basis for  ${\mathbb{C}}^q$. Equivalently, the collection 
  $\mathfrak{B}(\Phi)\subset \ell^2({\mathbb{Z}}^2_N)$ is an ONWS in
$\ell^2({\mathbb{Z}}^2_N)$ if and only if  
the system matrix of $\Phi$ is unitary for each $k \in \mathfrak{D}.$ Hence (i) $\Leftrightarrow$ (iii) 
 follows.
\end{proof}
 Following are some examples of ONWS in 
$\ell^2({\mathbb{Z}}^2_N)$ with respect to expansive as well as non-expansive matrices. 
In the case of $L^2(\mathbb R^n)$, the expansive and non-expansive 
nature of a dilation matrix plays an important role in the existence of an orthonormal wavelet. 
 A full characterization of dilation matrices which yields wavelets is still an open problem.  
\begin{example}\label{eg 2.12}
 (i) \textbf{For non-expansive matrix:} Let us recall Example~\ref{eg 2.8} in which $\mathfrak D=\{(0, 0)^t, (1, 0)^t\}$, and 
 the matrix $A$ is non-expansive since one of its  eigenvalues ($2 \pm \sqrt{2}$) is less than 1.  Consider $\Phi_1=\{\varphi_0,\varphi_1\} \subset \ell^2({\mathbb{Z}}^2_2)$   defined by  
$$\varphi_0={{\begin{pmatrix}
\varphi_0((0,0)^t) & \varphi_0((0,1)^t) \\
\varphi_0((1,0)^t) & \varphi_0((1,1)^t)
\end{pmatrix}}}={{\begin{pmatrix}
\frac{1}{\sqrt{2}} & 0 \\
\frac{1}{\sqrt{2}} & 0
\end{pmatrix}}},   
 ~\mbox{and}~
\varphi_1={{\begin{pmatrix}
\frac{1}{\sqrt{2}} & 0 \\
-\frac{1}{\sqrt{2}} & 0
\end{pmatrix}}}.$$
Then, the  collection 
$
\mathfrak{B}(\Phi_1)  = \{ T_{A k} \varphi_{p}: k \in \mathfrak{D},\, 0\leq p \leq 1\}
= \{\varphi_0, \varphi_1, T_{(0,1)^t}\varphi_0, T_{(0,1)^t}\varphi_1\}
$
is an ONWS for $\ell^2({\mathbb{Z}}^2_2)$, where 
$T_{(0,1)^t}\varphi_0={{\begin{pmatrix}
0 & \frac{1}{\sqrt{2}}\\
0 &\frac{1}{\sqrt{2}}
\end{pmatrix}}}
  ~\mbox{and}~
T_{(0,1)^t}\varphi_1={{\begin{pmatrix}
0 & \frac{1}{\sqrt{2}}\\
0 &-\frac{1}{\sqrt{2}}
\end{pmatrix}}}.$
 Further, the discrete Fourier transforms  of $\varphi_0$ and $\varphi_1$ are given by
 $\widehat{\varphi}_0={{\begin{pmatrix}
\sqrt{2} &\sqrt{2} \\
 0 & 0
\end{pmatrix}}}
~\mbox{and}~
\widehat{\varphi}_1={{\begin{pmatrix}
0&0  \\
\sqrt{2}& \sqrt{2}
\end{pmatrix}}},$ respectively.
 We can easily verify that for each $k \in \mathfrak{D}$ and $0\leq p_1,p_2\leq 1$,
  \begin{align*}
\displaystyle \sum_{\gamma \in C{\mathbb{Z}}^2_2}\widehat{\varphi}_{p_1}(k+ \gamma)
\overline{\widehat{\varphi}_{p_2}(k+ \gamma)}= 2 \delta_{p_1p_2},  
\end{align*} 
 where $C{\mathbb{Z}}^2_2=B^t{\mathbb{Z}}^2_2=\{(0,0)^t, (1,0)^t\}$. Hence, $\mathfrak{B}(\Phi_1)=
 \{ T_{Ak} \varphi_{p}: k \in \mathfrak{D},\, 0\leq p \leq 1 \}
\subset \ell^2({\mathbb{Z}}^2_2)$ forms an ONWS in $\ell^2({\mathbb{Z}}^2_2),$ by using Theorem~\ref{thm 2.15}.\\
(ii) \textbf{For expansive matrix:}
 Let $~A = \Big({\begin{matrix}~3&-1~ \\~ 1&1 ~\end{matrix}}\Big)$ be an expansive matrix with eigenvalues $2,2$.
 Then, $A {\mathbb{Z}}^2_4=B {\mathbb{Z}}^2_4=\Big\{  (0,0)^t,(1,3)^t,(2,2)^t,(3,1)^t \Big \},$
 where $B=NA^{-1}=\Big({\begin{matrix}~1&1~ \\~-1&3 ~\end{matrix}}\Big)$. 
 Further, we can choose the set of digits 
 $ ~\mathfrak{D}=\Big\{(0,0)^t,(0,1)^t,(0,2)^t,(0,3)^t\Big \} \subset {\mathbb{Z}}^2_4,$  
 which satisfies all properties from Theorem~\ref{thm 2.6}. Next, we consider $\Phi_2=\{ \varphi_0, \varphi_1, \varphi_2, \varphi_3 \} \subset \ell^2({\mathbb{Z}}^2_4)$
 whose   discrete Fourier transforms  are given by
{\footnotesize {$$
\widehat{\varphi_0}={{\begin{pmatrix}
\sqrt{2} & 0 & \sqrt{2}i & 0\\
-\sqrt{2}i & 0 & -\sqrt{2} & 0\\
0 & 1-i & 0 & -\sqrt{2}\\
0 & \sqrt{2} & 0 & -1-i
\end{pmatrix}}},~
\widehat{\varphi_1}={{\begin{pmatrix}
0 & \sqrt{2} & 0 & \sqrt{2}\\
0 & \sqrt{2}i & 0 & -\sqrt{2}i\\
-\sqrt{2} & 0 & \sqrt{2}i & 0\\
\sqrt{2}i& 0 & \sqrt{2} & 0
\end{pmatrix}}},
$$}}
{\footnotesize{
$$
\widehat{\varphi_2}={{\begin{pmatrix}
0 & \sqrt{2}i & 0 & \sqrt{2}i\\
0 & -1+i & 0 & 1-i\\
-\sqrt{2}i & 0 & 1-i & 0\\
\sqrt{2} & 0 & -\sqrt{2}i & 0
\end{pmatrix}}},
~\mbox{and}~ \
\widehat{\varphi_3}={{\begin{pmatrix}
-1-i & 0 & 1-i & 0\\
1+i & 0 & 1-i & 0\\
0 & -\sqrt{2}i & 0 & -1+i\\
0 & 1+i & 0 & -\sqrt{2}i
\end{pmatrix}}}.
  $$}}
   Then, for $0\leq p_1,p_2\leq 3,$ we can check that 
 $
\displaystyle \sum_{\gamma \in C{\mathbb{Z}}^2_4}\widehat{\varphi}_{p_1}(k+\gamma)
\overline{\widehat{\varphi}_{p_2}(k+ \gamma)}= 4 \delta_{p_1p_2}  
~\mbox{for all}~  k \in \mathfrak{D},
$ where $C{\mathbb{Z}}^2_4=B^t{\mathbb{Z}}^2_4=\{(0,0)^t,(1,1)^t,(2,2)^t,(3,3)^t \}$. Hence, from Theorem~\ref{thm 2.15}, we conclude that the collection 
$\mathfrak{B}(\Phi_2)=\{ T_{Ak} \varphi_{p}: k \in \mathfrak{D},\, 0\leq p \leq 3 \}
\subset \ell^2({\mathbb{Z}}^2_4)
$ forms an ONWS in $\ell^2({\mathbb{Z}}^2_4).$
 \end{example}
  In the next section, we discuss some results on uncertainty 
  principle by coupling Fourier basis and standard orthonormal basis 
  with the orthonormal wavelet system $\mathfrak{B}(\Phi)$ defined in (\ref{eq 2.2}). For this, let us denote the 
  standard orthonormal basis for $\ell^2({\mathbb{Z}}^2_N)$ by
  $\mathfrak{B}_S:=\{ T_{Ak}e_{\alpha_j}: k \in \mathfrak{D},\, 0\leq j \leq q-1 \}$, 
  where $\{ \alpha_j\}_{j=0}^{q-1}= \mathfrak{D}_0 \subset {\mathbb{Z}}^2_N $ (see Proposition~\ref{le 2.9}  for more details). By $\mathfrak{F}:=\{F_{v}\}_{ v \in {\mathbb{Z}}^2_N} $, we denote a two dimensional 
  Fourier basis for $\ell^2({\mathbb{Z}}^2_N)$ defined by $F_v(n)=\frac{1}{N}e^{2\pi i \langle{n,v}\rangle/N}$,  
  for all $n \in {\mathbb{Z}}^2_N$.
  For a non-zero vector $  f \in \ell^2({\mathbb{Z}}^2_N)$, we can  write 
    \begin{equation}\label{eq 2.5}
    f=\displaystyle \sum_{k \in \mathfrak{D}}\sum_{j=0}^{q-1} t_{j,k} T_{Ak}e_{\alpha_j} =   
    \displaystyle  \sum_{m \in \mathfrak{D}}\sum_{p=0}^{q-1} s_{p,m} T_{Am} \varphi_p=
    \displaystyle \sum_{v \in {\mathbb{Z}}^2_N}w_{v}F_{v},
    \end{equation}
    and hence, we have 
     \begin{equation}\label{eq 2.6}
     \|f\|^2=\displaystyle \sum_{k \in \mathfrak{D}}\sum_{j=0}^{q-1} |t_{j,k}|^2=   
     \displaystyle  \sum_{m \in \mathfrak{D}}\sum_{p=0}^{q-1} |s_{p,m}|^2 =
     \displaystyle \sum_{v \in {\mathbb{Z}}^2_N} |w_{v}|^2.
     \end{equation}
   Further, we denote the number of non-zero coefficients
    from among $\{t_{j,k}: k \in \mathfrak{D}, 0\leq j \leq q-1 \}$, $\{w_{v}: v \in \mathbb{Z}_N^2\}$
    and $\{s_{p,m}: m \in \mathfrak{D}, 0\leq p \leq q-1 \}$ by 
    $S_f,C_f$ and $W_f$, respectively. Note that \textit{max $X$} represents the maximum of all elements of the set
    $X\subset \mathbb R$.
\section{Uncertainty Principle corresponding to ONWS}\label{sec3}
Uncertainty principles put restrictions on how well frequency localized  a good time localized signal can be and vice versa. In the case of a signal defined on a finite abelian group, localization is generally
expressed through the cardinality of the support of the signal.  
Uniqueness of sparse representation of a signal depends upon the bound provided by uncertainty 
relations in terms of pair of bases.  
For the setup of $\ell^2({\mathbb{Z}}^2_N)$,   we prove following   results on the uncertainty principle with respect to   $\mathfrak{B}(\Phi)$  that can be generalized for the case of $\ell^2({\mathbb{Z}}^M_N), M \in \mathbb N$:
\begin{theorem} \label{thm 3.1}   Let  $\Phi=\{\varphi_p\}_{p=0}^{q-1} \subset \ell^2({\mathbb{Z}}^2_N)$ be such that 
$\mathfrak{B}(\Phi)$ is an ONWS in $\ell^2({\mathbb{Z}}^2_N)$.  Consider   two positive real numbers  $R_0$ and $E_0$    
defined by 
	\begin{align*} R_0&= \max\left\{|\varphi_p(\alpha +A\beta)|: \alpha \in \mathfrak D_0, \beta \in \mathfrak D,  0\leq   p \leq q-1 \right\}, 
	 \ and \\   E_0&=\max\displaystyle \Big\{\frac{1}{N}\sum_{n \in {\mathbb{Z}}^2_N}|\varphi_p(n)|: 0\leq p \leq q-1\Big\}.
	\end{align*} 
Then,  the following inequalities hold true:
		 \begin{itemize}
		 	\item [(i)] The  bounds for $R_0$ and $E_0$ are given by $\frac{1}{N} \leq R_0, E_0 \leq 1$. 
		 	In case of $R_0=1$, the system $\mathfrak{B}(\Phi)$ is not frequency localized.
		 \item [(ii)]	Representations of $f\in \ell^2({\mathbb{Z}}^2_N)$ in terms  of $\mathfrak{B}(\Phi)$ and
		 $\mathfrak{B}_S$  provide   following  relations:
		 $$
		 S_f W_f \geq \max\left\{2, \frac{1}{R_0^2}\right\},  \ \  \mbox{and} \ \  S_f + W_f \geq \max\left\{3, \frac{2}{R_0}  \right\}.$$
		 \item [(iii)]	Representations of $f\in \ell^2({\mathbb{Z}}^2_N)$ in terms  of $\mathfrak{B}(\Phi)$ and 
		 $\mathfrak{F}$  provide   following  relations:
		 $$
		 C_f W_f \geq \frac{1}{E_0^2},  \ \  \mbox{and} \ \  C_f + W_f \geq \frac{2}{E_0}.$$
		 	\end{itemize} 
	\end{theorem}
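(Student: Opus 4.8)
The plan is to reduce every inequality to two ingredients: the \emph{mutual coherence} of the pair of orthonormal bases being compared, and the support uncertainty principle for a pair of bases (Donoho--Stark/Elad--Bruckstein, \cite{EB}). Recall that for two orthonormal bases $\{u_i\}$ and $\{v_j\}$ of a finite-dimensional Hilbert space with coherence $M=\max_{i,j}|\langle u_i,v_j\rangle|$, every nonzero $f$ with $a$ nonzero coefficients in the first basis and $b$ in the second satisfies $ab\ge 1/M^2$ and $a+b\ge 2/M$ (the latter from $a+b\ge 2\sqrt{ab}$). So first I would compute the two coherences. For the pair $(\mathfrak B(\Phi),\mathfrak B_S)$, using $\langle f,e_n\rangle=f(n)$ and $(T_{Am}\varphi_p)(n)=\varphi_p(n-Am)$ gives $|\langle T_{Am}\varphi_p,e_n\rangle|=|\varphi_p(n-Am)|$; since $n-Am$ runs over all of $\mathbb Z_N^2$ as $n$ does, and since $\mathbb Z_N^2=\{\alpha+A\beta:\alpha\in\mathfrak D_0,\beta\in\mathfrak D\}$ by Proposition~\ref{le 2.9}, this coherence equals exactly $R_0=\max_p\|\varphi_p\|_\infty$. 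For the pair $(\mathfrak B(\Phi),\mathfrak F)$, a direct computation (the shift $Am$ only contributes a unimodular factor) gives $|\langle T_{Am}\varphi_p,F_v\rangle|=\tfrac1N|\widehat{\varphi_p}(v)|$, and $|\widehat{\varphi_p}(v)|\le\sum_n|\varphi_p(n)|$ bounds this coherence above by $E_0$.

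With these identities, part~(i) is pure norm estimation. Each $\varphi_p$ is a unit vector ($\|\varphi_p\|_2=1$ by orthonormality), so $R_0=\max_p\|\varphi_p\|_\infty$ lies between $\|\varphi_p\|_2/N=1/N$ (from $\|\cdot\|_\infty\ge\|\cdot\|_2/N$, as there are $N^2$ entries) and $\|\varphi_p\|_2=1$ (from $\|\cdot\|_\infty\le\|\cdot\|_2$), while $E_0=\max_p\tfrac1N\|\varphi_p\|_1$ lies between $\tfrac1N\|\varphi_p\|_2=1/N$ (from $\|\cdot\|_1\ge\|\cdot\|_2$) and $\tfrac1N\sqrt{N^2}\,\|\varphi_p\|_2=1$ (Cauchy--Schwarz). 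For the last assertion, $R_0=1$ forces equality in $\|\varphi_{p_0}\|_\infty\le\|\varphi_{p_0}\|_2$ for some $p_0$, so $\varphi_{p_0}$ is a unimodular multiple of a single $e_{n_0}$; then $|\widehat{\varphi_{p_0}}|\equiv 1$, i.e. $\varphi_{p_0}$ is maximally spread in frequency, whence $\mathfrak B(\Phi)$ is not frequency localized.

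Parts~(ii) and~(iii) then follow by feeding the coherences into the uncertainty principle. For~(iii) the coherence is $\le E_0$, and since the bounds $1/M^2$ and $2/M$ are decreasing in $M$, we get $C_fW_f\ge 1/E_0^2$ and $C_f+W_f\ge 2/E_0$ at once. For~(ii) the coherence is exactly $R_0$, giving $S_fW_f\ge 1/R_0^2$ and $S_f+W_f\ge 2/R_0$; the remaining bounds $S_fW_f\ge 2$ and $S_f+W_f\ge 3$ I would obtain from a distinctness argument: if $S_f=W_f=1$ then $f$ is simultaneously a scalar multiple of some $e_n$ and of some $T_{Am}\varphi_p$, which forces $\varphi_p$ to be a shifted delta and hence $R_0=1$. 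So in the frequency-localized regime $R_0<1$ the two bases share no common vector, $S_f,W_f$ are positive integers not both equal to $1$, and therefore $S_fW_f\ge 2$ and $S_f+W_f\ge 3$. Taking the larger of the two lower bounds yields the stated maxima.

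The hard part is the honest treatment of the degenerate case $R_0=1$ in part~(ii): precisely when some generator is a delta, $\mathfrak B(\Phi)$ and $\mathfrak B_S$ do share a vector, and choosing $f$ equal to that shared vector gives $S_f=W_f=1$, so the combinatorial bounds $S_fW_f\ge 2$ and $S_f+W_f\ge 3$ genuinely fail. I would therefore state the distinctness bounds under the standing hypothesis that $\mathfrak B(\Phi)$ is frequency localized, equivalently $R_0<1$ --- which is the only interesting regime and is exactly the degeneracy already isolated in part~(i). The only other point requiring care is that the Elad--Bruckstein estimate is usually quoted for two given bases, whereas for $(\mathfrak B(\Phi),\mathfrak F)$ I only use an upper bound for the coherence; this causes no difficulty since both conclusions are monotone in $M$.
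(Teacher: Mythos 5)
Your proposal is correct, and its skeleton coincides with the paper's: both arguments identify $R_0$ and $E_0$ as (respectively an exact value of and an upper bound for) the mutual coherence of $\mathfrak B(\Phi)$ with $\mathfrak B_S$ and with $\mathfrak F$, and then invoke the coherence-based support uncertainty principle for a pair of orthonormal bases. The differences lie in execution. Where you cite the Elad--Bruckstein bounds $ab\ge 1/M^2$ and $a+b\ge 2/M$, the paper re-derives them inline by the standard double Cauchy--Schwarz estimate applied to $\|f\|^2=\bigl|\bigl\langle \sum_{m,p}s_{p,m}T_{Am}\varphi_p,\ \sum_{k,j}t_{j,k}T_{Ak}e_{\alpha_j}\bigr\rangle\bigr|$; the content is identical. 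For the lower bound $R_0\ge 1/N$ your norm comparison $1=\|\varphi_p\|_2^2\le N^2\|\varphi_p\|_\infty^2$ is more elementary than the paper's route, which assembles the $N^2\times N^2$ change-of-basis matrix, observes that the squares of its entries sum to $N^2$, and concludes some entry has modulus at least $1/N$; both are valid, and yours in fact gives the bound for every $p$ rather than only for the maximum. For the bounds $S_fW_f\ge 2$ and $S_f+W_f\ge 3$ you use a shared-vector (distinctness) argument, whereas the paper leaves this step implicit, presumably relying on integrality of $S_fW_f$ together with $1/R_0^2>1$ once $R_0<1$ is secured; again both work and both hinge on excluding $R_0=1$. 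Finally, your explicit flagging of the degenerate case $R_0=1$ --- where $\mathfrak B(\Phi)$ and $\mathfrak B_S$ can share a vector and the combinatorial bounds genuinely fail --- is handled in the paper only by the remark that part (i) forbids this case for a time-frequency localized system; your point that $R_0<1$ is effectively a standing hypothesis of part (ii) is a fair and careful reading of what the theorem actually requires.
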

\begin{proof}
 By considering representations of $f\in \ell^2({\mathbb{Z}}^2_N)$ in terms of $\mathfrak{B}(\Phi)$ 
   and $\mathfrak{B}_S$ from (\ref{eq 2.5}), we have the following:
 \begin{align*}
 {||f||}^2  
 &= \left|\left\langle {\sum_{m \in \mathfrak{D}} \displaystyle \sum_{p=0}^{q-1} s_{p,m} T_{Am} \varphi_p, 
 \displaystyle \sum_{k \in \mathfrak{D}} \sum_{j=0}^{q-1} t_{j,k} T_{Ak}e_{\alpha_j}}\right\rangle\right| 
   = \left|\sum_{m \in \mathfrak{D}} \displaystyle \sum_{p=0}^{q-1} s_{p,m}
 \displaystyle \sum_{k \in \mathfrak{D}} \sum_{j=0}^{q-1} \overline{t_{j,k}}\left\langle{T_{Am} \varphi_p, T_{Ak}e_{\alpha_j}}
 \right\rangle\right|\\
 & \leq 
 \sum_{m \in \mathfrak{D}} \displaystyle \sum_{p=0}^{q-1}
 \displaystyle \sum_{k \in \mathfrak{D}} \sum_{j=0}^{q-1}
  |s_{p,m}| |\overline{t_{j,k}}||\langle{T_{Am} \varphi_p, T_{Ak}e_{\alpha_j}}
 \rangle|,
 \end{align*}
and  hence we have
 \begin{equation}\label{eq 3.2}
 {||f||}^2\leq \displaystyle \sum_{m \in \mathfrak{D}} \displaystyle \sum_{p=0}^{q-1} |s_{p,m}|
 \displaystyle \sum_{k \in \mathfrak{D}} \sum_{j=0}^{q-1}
  |\overline{t_{j,k}}||\langle{T_{Am} \varphi_p, T_{Ak}e_{\alpha_j}}
 \rangle|. 
 \end{equation}
 Similarly, if we proceed by using representations of $f\in \ell^2({\mathbb{Z}}^2_N)$ in terms of $\mathfrak{B}(\Phi)$ 
 and  $\mathfrak{F}$, we get 
 \begin{equation}\label{eq 3.3}
 {||f||}^2\leq \displaystyle \sum_{m \in \mathfrak{D}} \displaystyle \sum_{p=0}^{q-1} |s_{p,m}|
 \displaystyle \sum_{v \in \mathbb{Z}_N^2}
  |\overline{w_{v}}||\langle{T_{Am} \varphi_p, F_{v}}
 \rangle|. 
 \end{equation}
In view of Theorem \ref{thm 2.6}, we observe  that for $k,m \in \mathfrak{D}$ and $0\leq j,p \leq q-1 $,
 \begin{align*}
 |\langle{T_{Am} \varphi_p, T_{Ak}e_{\alpha_j}}\rangle|
 &=  |\langle{ \varphi_p, T_{(Ak-Am)}e_{\alpha_j}}\rangle| 
   =  |\langle{ \varphi_p, T_{A\beta}e_{\alpha_j}}\rangle|~\mbox{for some}~ \beta \in \mathfrak{D}\\
  &= |\mbox{trace}(({T_{A\beta}e_{\alpha_j}})^{*} \varphi_p)| 
   = \left|  \sum_{n \in {\mathbb{Z}}^2_N}\varphi_p(n)\overline{T_{A \beta} e_{\alpha_j}(n)} \right|\\
   &= \left|  \sum_{n \in {\mathbb{Z}}^2_N}\varphi_p(n)\overline{ e_{\alpha_j + A\beta}(n)} \right|
     = \left|\varphi_p(\alpha_j +A\beta)\right|,
 \end{align*}
 which implies that  $R_1=R_2$, for $R_1=\{|\langle{T_{Am} \varphi_p, T_{Ak}e_{\alpha_j}}\rangle|:k,m \in \mathfrak{D}, 0\leq j,p \leq q-1\}$,
 and $R_2=\left\{|\varphi_p(\alpha_j +A\beta)|: \alpha_j \in \mathfrak D_0, \beta \in \mathfrak D,  0\leq   p, j \leq q-1 \right\}$, and hence for any $h \in R_1$,
we have $h \leq R_0$, where $R_0=\max  R_2$. Using this fact in (\ref{eq 3.2}), along with (\ref{eq 2.6}) and  the Cauchy-Schwarz inequality, 
it is clear that 
 \begin{align*}
  {||f||}^2 & \leq \displaystyle \sum_{m \in \mathfrak{D}}  \sum_{p=0}^{q-1} |s_{p,m}|
 \displaystyle \sum_{k \in \mathfrak{D}} \sum_{j=0}^{q-1} |\overline{t_{j,k}}|R_0 
   \leq \sqrt{\displaystyle \sum_{m,p: |s_{p,m}|\neq 0} |s_{p,m}|^2 } \sqrt{\displaystyle \sum_{m,p: |s_{p,m}|\neq 0} 
\Big( \sum_{k \in \mathfrak{D}}\sum_{j=0}^{q-1} |\overline{t_{j,k}}|R_0\Big)^2}\\
 & = \sqrt{\displaystyle \sum_{m \in \mathfrak{D}}  \sum_{p=0}^{q-1} |s_{p,m}|^2 }
 \sqrt{ W_f \Big( \sum_{k \in \mathfrak{D}}\sum_{j=0}^{q-1} |\overline{t_{j,k}}|R_0\Big)^2} 
   = ||f||\sqrt{W_f}  \sum_{k \in \mathfrak{D}}\sum_{j=0}^{q-1} |\overline{t_{j,k}}|R_0\\
 & \leq ||f|| \sqrt{W_f} \sqrt{\displaystyle \sum_{j,k: |{t_{j,k}}|\neq 0} |\overline{t_{j,k}}|^2 } 
 \sqrt{\displaystyle \sum_{j,k: |t_{j,k}|\neq 0}{\big(R_0 \big)}^2} 
   = ||f||\sqrt{W_f} \sqrt{\displaystyle \sum_{k \in \mathfrak{D}}\sum_{j=0}^{q-1} |t_{j,k}|^2 } 
 \sqrt{S_f{\big(R_0 \big)}^2}\\
 &= {||f||}^2 \sqrt{W_f} \sqrt{S_f}R_0.
 \end{align*}
 Therefore, we have the inequality $\sqrt{S_f W_f}\geq \frac{1}{R_0}$. Further, by assuming part (i) [which we will prove later], we cannot consider $R_0$   equal to 1, otherwise the system $\mathfrak{B}(\Phi)$ will not remain time-frequency localized. Next, by using inequality of arithmetic and geometric means, we have 
 $S_f + W_f \geq 2 \sqrt{S_f W_f}\geq \frac{2}{R_0},$ which leads to   (ii).

 Next, we prove  (iii) part.  For this, first  we observe the following:
 \begin{align*}
 |\langle{T_{Am} \varphi_p, F_{v}}\rangle|
 &=  |\sum_{n \in {\mathbb{Z}}^2_N}T_{Am} \varphi_p(n)\overline{F_{v}(n)}| 
   =  |\sum_{n \in {\mathbb{Z}}^2_N}\varphi_p(n-Am)\frac{1}{N}e^{-2\pi i \langle{n,v}\rangle/N}| 
   \leq \frac{1}{N}\sum_{t \in {\mathbb{Z}}^2_N}|\varphi_p(t)|, 
 \end{align*}
for $m \in \mathfrak{D}, 0\leq p \leq q-1$ and $ v \in \mathbb{Z}_N^2$, which implies that   $\max X \leq \max Y$, 
 where  sets $X$ and $Y$ are given by $\{|\langle{T_{Am} \varphi_p, F_{v}}\rangle|:m \in \mathfrak{D}, 0 \leq p \leq q-1, v \in 
 {\mathbb{Z}}^2_N \}$ and 
 $\Big\{\displaystyle \frac{1}{N}\sum_{t \in {\mathbb{Z}}^2_N}|\varphi_p(t)|: 0\leq p \leq q-1\Big\},$ 
 respectively, and hence for any $x \in X$,
we have $x \leq E_0$, where $E_0=\max Y$. Therefore, we obtain the inequality $\sqrt{C_f W_f}\geq \frac{1}{E_0}$,    using the above discussion in (\ref{eq 3.3}) along with the way we proceeded in case of (ii).
 Now, by inequality of arithmetic and geometric means, we get
$C_f + W_f \geq 2 \sqrt{C_f W_f}\geq \frac{2}{E_0},$ and hence (iii) follows. 

For the   part (i), let us  consider  elements of $\mathfrak{B}(\Phi)$  and $\mathfrak{B}_S$. Then,   from Cauchy-Schwarz inequality, we have 
$$|\langle{T_{Am} \varphi_p, T_{Ak}e_{\alpha_j}}\rangle| \leq ||T_{Am} \varphi_p || ||T_{Ak}e_{\alpha_j} ||=1,$$  for all  $k,m \in \mathfrak{D}$ and $0\leq j,p \leq q-1.$ 
 Therefore, $1$ is an upper bound for the set $R_1$ and hence its maximum element $R_0\leq 1$. Next, we note that the real number  $R_0$ cannot take the value 1.  
For this, let by contradiction we assume $R_0=1$. Then, we have  
\begin{align*}1&=\max\left\{|\varphi_p(\alpha +A\beta)|: \alpha \in \mathfrak D_0, \beta \in \mathfrak D,  0\leq   p \leq q-1 \right\}\\
&= \max\{|\varphi_p(n)|: n \in {\mathbb{Z}}^2_N, 0\leq p \leq q-1 \},
\end{align*}
since the collection  $\{ \alpha +  Ak: \alpha \in \mathfrak{D}_0,\, k \in \mathfrak{D} \}$  is a partition of $\displaystyle{\mathbb{Z}}^2_N$. This  assures the existence of  
$p_1 \in \{0,1,\ldots, q-1\}$ and $n_1 \in \mathbb{Z}_N^2$ such that $|\varphi_{p_1}(n_1)|=1.$ Therefore, we have
\begin{equation} \label{eq 3.5}
 ||\varphi_{p_1}||^2=\displaystyle \sum_{m \in \mathbb{Z}_N^2}|\varphi_{p_1}(m)|^2= 
 \displaystyle |\varphi_{p_1}(n_1)|^2 + \sum_{m\neq n_1 \in \mathbb{Z}_N^2}|\varphi_{p_1}(m)|^2 =1,
\end{equation}
in view of the fact that $\varphi_{p_1} \in \Phi$ and $\mathfrak{B}(\Phi)$ forms an ONWS in $\ell^2(\mathbb{Z}_N^2).$ 
Since $|\varphi_{p_1}(n_1)|=1,$ therefore from 
(\ref{eq 3.5}), it is clear that $\displaystyle \sum_{m\neq n_1 \in \mathbb{Z}_N^2}|\varphi_{p_1}(m)|^2=0,$ which provides
$\varphi_{p_1}(m)=0$ for all $m\neq n_1 \in \mathbb{Z}_N^2,$ and hence we can define $\varphi_{p_1} \in \Phi \subset 
\ell^2(\mathbb{Z}_N^2)$ by $\varphi_{p_1}(n)=e^{i \theta (n)}$ for $n=n_1 \in \mathbb{Z}_N^2$, and zero otherwise,  
  where the real number $\theta(n)$ depends on $n.$ Therefore, we have  $\widehat{\varphi_{p_1}}(m)=
  \displaystyle \sum_{n \in \mathbb{Z}_N^2}\varphi_{p_1}(n) e^{-2\pi i \langle{m,n}\rangle/N}= e^{i \theta(n_1)}
  e^{-2\pi i \langle{m,n_1}\rangle/N}$, and hence $|\widehat{\varphi_{p_1}}(m)|=1$ for all $m \in \mathbb{Z}_N^2$. Thus, we  conclude that $\varphi_{p_1}$ is not frequency localized which implies that the system $\mathfrak{B}(\Phi)$ is  not  time-frequency localized.
  
 Now, for computing a lower bound of $R_0$ 
  (greater than zero), we consider an $N^2 \times N^2$ 
matrix $M=\left( \langle{T_{Am} \varphi_p, T_{Ak}e_{\alpha_j}}\rangle\right),$
where rows of the matrix are varying over 
$m \in \mathfrak{D}$ and $0\leq p\leq q-1$, and columns over $k \in \mathfrak{D}$ and $0\leq j \leq q-1$. Observe that, for each 
$k, m \in \mathfrak{D}$ and $0\leq p,  j\leq q-1$, we have
$$
\displaystyle \sum_{m \in \mathfrak{D}}\sum_{p=0}^{q-1}
{|\langle{T_{Am} \varphi_p, T_{Ak}e_{\alpha_j}}\rangle|}^2={||T_{Ak}e_{\alpha_j}||}^2=1, \  \mbox{and} \ \displaystyle \sum_{k \in \mathfrak{D}}\sum_{j=0}^{q-1}
{|\langle{T_{Ak}e_{\alpha_j}, T_{Am}\varphi_p}\rangle|}^2={||T_{Am}\varphi_p||}^2=1.
$$ 
Therefore,  rows and columns of $ {M}$ have unit norm. Further, for $0 \leq j_1 \neq j_2,p \leq q-1$ and  
$m, k_1 \neq k_2 \in \mathfrak{D}$, the inner product of any two columns of $ {M}$  given by 
$$
\displaystyle \sum_{m \in \mathfrak{D}}\sum_{p=0}^{q-1} \langle{T_{Am} \varphi_p, T_{A{k_1}}e_{j_1}}\rangle
\overline{\langle{T_{Am}\varphi_p, T_{A{k_2}}e_{j_2}}\rangle}=\langle{T_{A{k_2}}e_{j_2}, T_{A{k_1}}e_{j_1}}\rangle=0,$$
implies that columns of $ {M}$ are orthogonal. Similarly, we can check rows of $ {M}$ are orthogonal, and hence
$ {M}$ is an orthonormal matrix having the sum of squares of its entries equal to $N^2$. Therefore, all of its entries
cannot be less than $1/N$. This follows by noting that if we assume that all entries of $ {M}$ are less than $1/N$,  then, sum of 
squares of all $N^4$ entries of $ {M}$ will be less than $N^4\times \frac{1}{N^2}$, that is, $N^2$, which is not true. 
Further, observe that the absolute values  of all entries of matrix $ {M}$ are exactly the elements of the set $R_1$. Hence, we conclude 
that there exists $h \in R_1$ such that $h\geq 1/N$.  Thus, $R_0 \geq 1/N$. Hence, we have $1/N \leq R_0 < 1$.  

Similarly, we can show that $1/N \leq E_0 \leq 1$ by considering elements of  $\mathfrak{B}(\Phi)$ 
and  $\mathfrak{F}$.
\end{proof} 


\end{document}